\theoremstyle{plain}
\newtheorem{Theorem}{Theorem}
\newtheorem{Proposition}[Theorem]{Proposition}
\newtheorem{Lemma}[Theorem]{Lemma}
\newtheorem{Corollary}[Theorem]{Corollary}
\theoremstyle{definition}
\newtheorem{Definition}[Theorem]{Definition}
\theoremstyle{remark}
\newtheorem{Remark}[Theorem]{Remark}
\newtheorem{Example}[Theorem]{Example}
\begin{document}

\title[Regular gradings]{On finite dimensional regular gradings}
\author[L. Centrone]{Lucio Centrone}
\address{Dipartimento di Matematica, Universit\`a degli Studi di Bari Aldo Moro, Via Edoardo Orabona, 4, 70125 Bari, Italy}
\email{lucio.centrone@uniba.it}
\thanks{L. Centrone was partially supported by PNRR-MUR PE0000023-NQSTI}
\author[P. Koshlukov]{Plamen Koshlukov}
\address{IMECC, UNICAMP, Rua S\'ergio Buarque de Holanda 651, 13083-859 Campinas, SP, Brazil}
\email{plamen@unicamp.br}
\thanks{P. Koshlukov was partially supported by FAPESP Grant 2024/14914-9 and by CNPq Grant 307184/2023-4}
\author[K. Pereira]{Kau\^e Pereira}
\address{IMECC, UNICAMP, Rua S\'ergio Buarque de Holanda 651, 13083-859 Campinas, SP, Brazil}
\email{k200608@dac.unicamp.br}
\thanks{K. Pereira was supported by FAPESP Grant 2023/01673-0}

\subjclass[2020]{16R10, 16R50, 16W55, 16T05}

\keywords{Regular decomposition; regular gradings; graded algebra; polynomial identities}

\begin{abstract} 
Let $A$ be an associative algebra over an algebraically closed field $K$ of characteristic 0. A decomposition $A=A_1\oplus\cdots \oplus A_r$ of $A$ into a direct sum of $r$ vector subspaces is called a \textsl{regular decomposition} if, for every $n$ and every $1\le i_j\le r$, there exist $a_{i_j}\in A_{i_j}$ such that $a_{i_1}\cdots a_{i_n}\ne 0$, and moreover, for every $1\le i,j\le r$ there exists a constant $\beta(i,j)\in K^*$ such that $a_ia_j=\beta(i,j)a_ja_i$ for every $a_i\in A_i$, $a_j\in A_j$. We work with decompositions determined by gradings on $A$ by a finite abelian group $G$. In this case, the function $\beta\colon G\times G\to K^*$ ought to be a bicharacter. A regular decomposition is {minimal} whenever for every $g$, $h\in G$, the equalities $\beta(x,g)=\beta(x,h)$ for every $x\in G$ imply $g=h$. In this paper we describe completely the structure of the finite dimensional algebras $A$ (with unit) admitting a $G$-regular grading. Moreover, we compute the graded codimension sequence for a class of such algebras assuming complete support and minimal regular decomposition. It turns out that, for these algebras, the graded PI-exponent coincides with the ordinary (ungraded) PI-exponent. Finally, we show that the regular decomposition of a finite-dimensional algebra $A$ with a regular $G$-grading is minimal if and only if $\exp(A)=|G|$.
\end{abstract}

\maketitle

\section{Introduction} 
The theory developed by A. Kemer in 1984--1986 is one of the most important achievements in the theory of algebras with polynomial identities, see \cite{kemer}. This theory produced a classification of the ideals of identities (also called \text{\it T-ideals}) of associative algebras in characteristic 0; it depends on the so-called \text{\it T-prime} T-ideals. The latter were described as the ideals of concrete, and well-known algebras. Kemer otained as a consequence the positive solution of the long-standing Specht problem: Is every T-ideal in characteristic 0 finitely generated as a T-ideal? Kemer's theory relies heavily on methods based on $\mathbb{Z}_2$-graded algebras. Since Kemer's work there has been a significant interest in group gradings on algebras and their graded polynomial identities. It should be mentioned that long before Kemer's work, the classification of the finite dimensional $\mathbb{Z}_2$-graded simple algebras was obtained by Wall \cite{wall}. These turned out to be closely related to the T-prime T-ideals. 

Considering algebras with an additional structure like group grading, or trace, or involution, or derivation, is sometimes easier. For example, if an algebra $A$ is $G$-graded then the homogeneous components tend to be "smaller" than the whole algebra, and thus may be easier to study. To exemplify this phenomenon, let us recall that the polynomial identities of a given algebra $A$ are known in very few cases: the matrix algebras $M_n(K)$, $n\le 2$ (\cite{razmm2, drenskym2, pkm2}), the infinite dimensional Grassmann algebra $E$ (\cite{latyshev, krreg}), $E\otimes E$ (\cite{popov}), the upper triangular matrix algebras $UT_n(K)$. On the other hand, the graded identities for the matrix algebras $M_n(K)$ are known, for every $n$, for the natural grading by $\mathbb{Z}_n$, see \cite{vasilovsky, azevedo}. 

In 2005, Regev and Seeman introduced the notion of a \text{\it regular decomposition} of an algebra, see \cite{regevz2}. Let $A=A_1\oplus \cdots\oplus A_r$ be a decomposition of $A$ into a direct sum of vector subspaces. Suppose that for every choice of indices $1\le i_j\le r$, $j=1$, \dots, $n$, there exist $a_{i_j}\in A_{i_j}$ such that $a_{i_1} \cdots a_{i_n}\ne 0$, and that for every $1\le i,j\le r$ there exists a constant $0\ne \beta(i,j)\in K$ such that $a_ia_j=\beta(i,j)a_ja_i$ for every $a_i\in A_i$, $a_j\in A_j$. Then the decomposition of $A$ is called a regular one. We are interested in decompositions of $A$ that are determined by a grading by a finite abelian group. Let us recall what a group grading on $A$ is. If $A$ is an algebra and $G$ a group then a vector space decomposition $A=\oplus_{g\in G} A_g$ is called a $G$-grading on $A$ whenever $A_gA_h\subseteq A_{gh}$ for every $g$, $h\in G$. Suppose $G$ is a finite group of order $k$ and that the regular decomposition of $A$ is determined by a $G$-grading on $A$. The $k\times k$ matrix $M^A=(\beta(g,h))$ is the \textsl{decomposition matrix} of $A$. In \cite{regevz2} it was proved that $M^A$ determines the multilinear polynomial identities of $A$. This implies that in characteristic 0, it determines the T-ideal of $A$. The notion of a \textsl{minimal decomposition} of $A$ was introduced and studied in the papers \cite{bahturin2009graded, bcommutation}. It is a regular decomposition such that the equalities $\beta(x,g)=\beta(x,h)$ for every $x\in G$ imply $g=h$. In other words, the matrix $M^A$ has no two equal columns. All this informally says that the decomposition is minimal whenever it cannot be "coarsened" by means of joining two (or more) of the vector subspaces in it. We recall here that the group grading on $A$ and the associativity of the product in $A$ imply that $\beta\colon G\times G\to K^*$ is a bicharacter on $G$ with values in $K$. 

The authors of the paper \cite{bcommutation} considered $M_n(K)$, the matrix algebra of order $n$ with its natural grading by the group $\mathbb{Z}_n\times \mathbb{Z}_n$. They computed the determinant of the matrix $M^A$, it turned out to be equal to $\pm n^{n^2}$. In \cite{bahturin2009graded} it was conjectured that the decomposition matrix is invertible if and only if the corresponding regular decomposition is minimal. Furthermore, the authors of \cite{bahturin2009graded} asked whether the determinant of the decomposition matrix and the number of direct summands in a minimal (regular) decomposition are invariants of the algebra $A$. All this was shown to be true by Aljadeff and David in \cite{Eli1} under the assumption that $K$ is of characteristic 0. In a recent papers of ours we showed that such conjectures do not hold if the base field is of characteristic $p>2$ \cite{LPK}. Let us point out that in \cite{Eli1} the authors proved that for a regular minimal decomposition of $A$, determined by the grading by a group $G$, one has $\det M^A=\pm |G|^{|G|/2}$. Furthermore, the authors of \cite{Eli1} proved that $|G|$ equals the PI exponent of $A$. Recall that the PI exponent of a PI algebra is one of the most important numerical invariants of $A$, and it has been extensively studied for more than fourty years. Let $K\langle X\rangle$ be the free associative algebra freely generated over $K$ by the countable set $X=\{x_1,x_2,\ldots\}$, and let $I$ be the ideal of identities of the algebra $A$. Denote by $P_n$ the subspace of $K\langle X\rangle$ of all multilinear polynomials in the variables $x_1$, \dots, $x_n$, it has a basis consisting of all monomials $x_{\sigma(1)} \cdots x_{\sigma(n)}$ where $\sigma\in S_n$, the symmetric group on $\{1,\ldots, n\}$. In characteristic 0, the T-ideal $I$ is determined by its multilinear elements, that is, by the vector spaces $P_n\cap I$. Hence one may study the identities of $A$ by considering the intersections $P_n\cap I$. But in the 1970s, A. Regev \cite{regev} proved a remarkable theorem. Denote by $P_n(A)=P_n/(P_n\cap I)$, and by $c_n(A)=\dim P_n(A)$, the $n$-th codimension of $A$. Regev's theorem states that if $A$ satisfies an identity of degree $d$ then $c_n(A)\le (d-1)^{2n}$. Since $\dim P_n=n!$ this means that $P_n\cap I$ is much "larger" than $P_n(A)$, and the latter vector space should be easier to study than the former. The codimension sequence of an algebra gives an estimate how fast its identities grow. In 1999 Giambruno and Zaicev proved that if $A$ is a PI algebra the limit $\lim_{n\to\infty} (c_n(A))^{1/n}$ exists and is a non-negative integer, thus confirming a conjecture posed by Amitsur, see for example the monograph \cite{GZbook}. 

In this paper we give a complete description of the finite dimensional $G$-graded regular algebras assuming $G$ a finite abelian group and the base field $K$ algebraically closed and of characteristic 0. In order to achieve these we use the graded version of the  Wedderburn--Malcev decomposition of a finite dimensional algebra, and reduce the study to that of twisted group algebras. As a consequence of the description above, we also compute the graded codimension sequence of algebras $A$ with a regular grading whose regular decomposition is minimal, has complete support, and whose graded simple components have nonzero product with the Jacobson radical. It turns out that, in this case, the graded PI-exponent of $A$ coincides with the usual (ungraded) one. Finally, we provide a characterization of finite-dimensional $G$-graded regular algebras with minimal decomposition in terms of their PI-exponent.

\section{Preliminares}
We recall the main notions concerning the so-called \textsl{regular gradings}. For more details,
see \cite{LPK}, \cite{bahturin2009graded}, or \cite{regevz2}. Throughout this paper, we denote by $K$ an algebraically closed field of characteristic $0$, $G$ a finite abelian group, and $X = \{x_{1}, x_{2},\ldots, x_{n},\ldots \}$ an infinite countable set of variables. The word \textsl{algebra} means an associative $K$-algebra.

\begin{Definition}
\label{def.reg.}
Let $A$ be a $G$-graded algebra. We say the algebra $A$ is $G$-\textsl{graded regular} if the following conditions hold:
\begin{enumerate}
      \item[(i)] for every $n\in \mathbb{N}$ and every $n$-tuple $(g_{1},\ldots, g_{n})\in G^{n}$, there exist homogeneous elements $a_{1}\in A_{g_{1}}$, \dots, $a_{n}\in A_{g_{n}}$ such that $a_{1}\cdots a_{n}\neq 0$.
     \item[(ii)] for every $g$, $h\in G$ and for every $a_{g}\in A_{g}$, $a_{h}\in A_{h}$, there exists $\beta(g,h)\in K^{\ast}$ satisfying
     \[
     a_{g}a_{h}=\beta(g,h)a_{h}a_{g}.
     \]
\end{enumerate}
 Moreover, we define the {\sl regular decomposition matrix} associated with the regular decomposition of the $G$-graded algebra $A$ as $M^{A}:=(\beta(g,h))_{g,h}$.
\end{Definition}

Let $A$ be a $G$-graded algebra with a regular grading. Notice that from item $(ii)$ of the definition above we get a function $\beta \colon G\times G \rightarrow K^{\ast}$ satisfying the following conditions. For every $g$, $h$, $k\in G$ one has:
\begin{equation}
\label{eq.1}
\beta(g,h)=(\beta(h,g))^{-1}
\end{equation}
\begin{equation}
    \label{eq.2}
   \beta(g,h+k)=\beta(g,h)\beta(g,k) 
\end{equation}
\begin{equation}
    \label{eq.3}
    \beta(g+k,h)=\beta(g,h)\beta(k,h)
\end{equation}
We say $\beta$ is the {\it bicharacter} associated to the regular decomposition of $A$.

Besides we also have
\[
\operatorname{Supp}(A)=\{g\in G\mid A_{g}\neq 0\}=G.
\]

\begin{Definition} A function $\nu\colon G\times G\rightarrow K^{\ast}$ satisfying (\ref{eq.1}), (\ref{eq.2}), and (\ref{eq.3}), is called a {\it bicharacter} of $G$.     
\end{Definition}

The next are classical examples of regular gradings.

\begin{Example}
\label{Grass.exe}
Let $E$ be the infinite dimensional Grassmann algebra. Then, it can be verified that $E$ with the natural $\mathbb{Z}_{2}$-grading $E=E_{0}\oplus E_{1}$ is a $\mathbb{Z}_{2}$-graded algebra with regular grading whose corresponding bicharacter is given by: $\beta(0,0)=\beta(0,1)=\beta(1,0)=1$ and $\beta(1,1)=-1$.
\end{Example}
\begin{Example}
\label{Pauli.grading.exe}
Given $n \in \mathbb{Z}$, let $\xi \in K$ be a primitive $n$-th root of unity. We can consider on $M_n(K)$ the $\xi$-grading \cite[Example 3.1.7]{GZbook}, which is defined by the matrices $X:=\operatorname{diag}(\xi^{n-1}, \xi^{n-2}, \dots, \xi, 1)$ and $Y:=e_{n,1}+\sum_{i=1}^{n-1} e_{i,i+1}$. Then,
    \[
        M_{n}(K)=\oplus_{1 \leq i,j \leq n-1} \operatorname{span}_K\{ X^i Y^j \}
    \]
defines on $M_n(K)$ a structure of a regular $(\mathbb{Z}_{n}\times \mathbb{Z}_{n})$-grading with bicharacter given by $\beta(i,j)=\xi^{jk-il}$. Moreover $\det (M^{M_{n}(K)})=\pm n^{n^{2}}$. 
\end{Example}
\begin{Example} Let $A$ be endowed with a regular $(\mathbb{Z}_n \times \mathbb{Z}_n)$-grading with bicharacter $\beta$. It is well known that $\beta((0,1),(1,0))$ is an $n$-th root of unity. Furthermore, if $\beta((0,1),(1,0))$ is primitive, then $\det M^{A}=\pm n^{n^2}$ \cite[Theorem 20]{LPK}.
\end{Example}
\begin{Example} Denote by $H^{2}(G,K^{\ast})$ the second cohomology group of $G$ with values in $K^*$  ($G$ acts trivially on $K^{\ast}$). The elements $\tau\in H^{2}(G,K^{\ast})$ satisfy the \textsl{cocycle condition} 
 \[
 \tau(g,h+k)\tau(h,k)=\tau(g+h,k)\tau(g,h),\quad \text{for every}\quad g,h,k\in G.
 \]
Consider the twisted group algebra $K^{\tau}G$ with basis $\{X_{g}\mid g\in G\}$, satisfying $X_{g}X_{h}=\tau(g,h)X_{g+h}$ \cite[Example 8]{LPK}. Then, $K^{\tau}G$ is a finite dimensional $G$-graded algebra with regular grading and bicharacter given by
\[
\beta(g,h)=\tau(g,h)\tau^{-1}(h,g),\quad \text{for any}\quad g,h\in G.
\]
 In this case we say $\tau$ {\it induces} $\beta$.  
\end{Example}


\begin{Example}
\label{matrix.direct}
If $A$ and $B$ are two $G$-graded algebras having a regular grading with the same bicharacter $\beta$, then $A\oplus B$ with the grading: $(A\oplus B)_{g}:=A_{g}\oplus B_{g}$, $g\in G$, is endowed with a regular  $G$-grading and bicharacter $\beta$ \cite[Lemma 26]{Eli1}.
\end{Example}

Recall that a $G$-graded algebra $D$ is called a \textsl{$G$-graded division algebra} if every homogeneous element of $D$ is invertible. For instance, by construction, a twisted group algebra is a graded division algebra.

\begin{Theorem}
\label{div.algebra.twisted}
\cite[Theorem 2.13]{elduque2013gradings} Let $D$ be a finite dimensional $G$-graded division algebra. Then, $D$ is isomorphic to a twisted group algebra. 
\end{Theorem}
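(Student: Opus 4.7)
The plan is to show that the graded-division hypothesis forces each nonzero homogeneous component of $D$ to be one-dimensional, and that choosing a nonzero element in each component produces a $2$-cocycle whose associated twisted group algebra is isomorphic to $D$.

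First I would check that $H := \operatorname{Supp}(D)$ is a subgroup of $G$. Given a nonzero homogeneous $a\in D_g$, write its inverse as $a^{-1}=\sum_h b_h$ with $b_h\in D_h$. Then $1=aa^{-1}=\sum_h ab_h$ is a graded decomposition of $1\in D_e$, so $ab_h\in D_{gh}$ must vanish for $gh\neq e$; since $a$ is invertible, this forces $b_h=0$ for $h\neq g^{-1}$. Thus $a^{-1}\in D_{g^{-1}}$, giving $g^{-1}\in H$; closure of $H$ under the group operation is immediate since the product of two nonzero homogeneous elements of a graded division algebra is nonzero.

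Next, the identity component $D_e$ is a finite-dimensional (ungraded) division $K$-algebra, so $D_e=K$ because $K$ is algebraically closed. For any $g\in H$ and any two nonzero $a,b\in D_g$, the quotient $a^{-1}b$ lies in $D_e=K$, hence $b\in Ka$. Therefore $\dim_K D_g=1$ for every $g\in H$. Choose a nonzero element $X_g\in D_g$ for each $g\in H$. Since $X_gX_h$ is a nonzero vector in the one-dimensional space $D_{gh}$, there is a unique $\tau(g,h)\in K^{\ast}$ with $X_gX_h=\tau(g,h)\,X_{gh}$. Associativity $(X_gX_h)X_k=X_g(X_hX_k)$ translates directly into the cocycle identity
\[
\tau(g,h)\,\tau(gh,k)=\tau(g,hk)\,\tau(h,k).
\]
The $K$-linear extension of $X_g\mapsto X_g$ then gives a $G$-graded algebra isomorphism $K^{\tau}H\cong D$, viewing $K^{\tau}H$ as $G$-graded by extending by zero outside $H$.

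There is no substantive obstacle: the argument is essentially bookkeeping. The one place to be careful is the step $D_e=K$, which relies on both hypotheses (algebraic closure of $K$ and finite dimensionality of $D$); from there, the one-dimensionality of each $D_g$ and the extraction of $\tau$ are forced, and the cocycle condition is a direct consequence of associativity.
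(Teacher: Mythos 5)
Your argument is correct and complete: the paper does not prove this statement but simply cites it from Elduque--Kochetov, and your proof is the standard one for that result (inverse of a homogeneous element is homogeneous, so the support is a subgroup; $D_e$ is a finite dimensional division algebra over the algebraically closed field $K$, hence equals $K$; therefore each $D_g$ is one-dimensional and associativity turns the structure constants $\tau(g,h)$ into a $2$-cocycle). The only point worth flagging is cosmetic: in the paper's applications $\operatorname{Supp}(D)=G$, so the subgroup $H$ you obtain is all of $G$ and no extension by zero is needed.
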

\begin{Definition}
\label{def.min}
Given a regular $G$-grading on $A$ with bicharacter $\beta$, the regular decomposition of $A$ is called \textsl{nonminimal} if there exist $g$, $h\in G$ such that $\beta(x,g)=\beta(x,h)$ for every $x\in G$. Otherwise we say the regular decomposition of $A$ is \textsl{minimal}.
\end{Definition}

Bahturin and Regev conjectured in \cite{bahturin2009graded} a regular $G$-graded algebra has a minimal regular decomposition if and only if $\det M^A \neq 0$. Aljadeff and David provided a positive answer to this conjecture in \cite{Eli1} in the case where the field $K$ is algebraically closed and of characteristic $0$. However, the conjecture fails when $\text{char}(K)=p > 2$ as it was shown in \cite{LPK}. 

Let us recall one of the results of \cite{Eli1}.

\begin{Theorem}\cite[Theorem 7]{Eli1}
\label{charac.minimal}
Let $A$ be a $G$-graded algebra with regular grading over a field $K$ with bicharacter $\beta\colon G\times G\rightarrow K^{\ast}$. Then we have
\begin{enumerate}
    \item If $\det M^{A}\neq 0$, then the regular decomposition of $A$ is minimal.
    \item Additionally, if $K$ is algebraically closed, then the regular decomposition of $A$ is minimal if and only if $\det M^{A}\neq 0$. Also, $\exp(A)=|G|$.
\end{enumerate}
\end{Theorem}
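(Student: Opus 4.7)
I would split the statement into three pieces and attack them separately. Assertion (1) follows at once from the definitions, argued by contraposition: if the regular decomposition is not minimal then by Definition \ref{def.min} there are distinct $g$, $h\in G$ with $\beta(x,g)=\beta(x,h)$ for every $x\in G$, so the columns of $M^{A}=(\beta(x,y))_{x,y\in G}$ indexed by $g$ and $h$ coincide and $\det M^{A}=0$. This uses nothing about the field.

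For the converse direction in (2), set $\chi_{g}(x):=\beta(x,g)$ and note that (\ref{eq.3}) makes $\chi_{g}$ a character of $G$, while (\ref{eq.2}) makes the assignment $\phi\colon G\to\widehat{G}$, $g\mapsto\chi_{g}$, a group homomorphism. Minimality of the decomposition is, by Definition \ref{def.min}, precisely the injectivity of $\phi$. Since $G$ is finite abelian and $K$ is algebraically closed of characteristic $0$, $K$ contains all roots of unity of order dividing $|G|$, so $|\widehat{G}|=|G|$ and an injective $\phi$ is automatically an isomorphism. The columns of $M^{A}$ are then the complete list of the distinct characters of $G$, and by the linear independence of distinct characters (Schur orthogonality) $M^{A}$ is non-singular.

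For the exponent identity, I would first compute the graded codimension. Fix a degree tuple $(g_{1},\ldots,g_{n})\in G^{n}$ and let $P_{n}^{(g_{1},\ldots,g_{n})}$ denote the space of multilinear polynomials in which $x_{i}$ is assigned degree $g_{i}$. Repeated use of condition (ii) of Definition \ref{def.reg.} produces, for every $\sigma\in S_{n}$, a scalar $\lambda_{\sigma}\in K^{\ast}$ depending only on $\beta$ and $\sigma$ such that
\[
a_{g_{\sigma(1)}}\cdots a_{g_{\sigma(n)}}=\lambda_{\sigma}\, a_{g_{1}}\cdots a_{g_{n}}
\]
for every choice of homogeneous $a_{g_{i}}\in A_{g_{i}}$. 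Condition (i) guarantees the common right-hand side is not identically zero, so $\dim P_{n}^{(g_{1},\ldots,g_{n})}(A)=1$. Summing over the $|G|^{n}$ tuples gives $c_{n}^{gr}(A)=|G|^{n}$ and hence $\exp_{gr}(A)=|G|$. The proof is then completed by invoking the well-known equality $\exp(A)=\exp_{gr}(A)$ valid for finite dimensional $G$-graded algebras over an algebraically closed field of characteristic zero.

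The main obstacle is the converse in (2): it depends crucially on the surjectivity of $\phi$, which fails once $\operatorname{char}(K)$ divides $|G|$. This is precisely why the equivalence breaks down in the positive-characteristic setting studied in \cite{LPK}. The final passage from the graded to the ungraded PI exponent, although standard, is the other non-trivial ingredient and is where the characteristic zero hypothesis is again essential.
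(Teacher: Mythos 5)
First, a point of comparison: the paper does not prove this statement at all --- it is quoted from Aljadeff--David \cite[Theorem 7]{Eli1} and used as a black box (Remark \ref{min.} and the final Corollary both depend on it), so your attempt can only be measured against the source. Your argument for assertion (1) (two equal columns force $\det M^{A}=0$) and for the equivalence in (2) is correct: by (\ref{eq.3}) each column $x\mapsto\beta(x,g)$ is a character of $G$, minimality says these $|G|$ characters are pairwise distinct, and linear independence of distinct characters makes the columns independent. (Surjectivity of $\phi$ is not actually needed here, only distinctness of the $\chi_{g}$; the failure in positive characteristic has a different source.)

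The genuine gap is in the claim $\exp(A)=|G|$. Your computation $c_{n}^{G}(A)=|G|^{n}$, hence $\exp^{G}(A)=|G|$, is fine and uses only regularity, not minimality. But the ``well-known equality $\exp(A)=\exp^{G}(A)$ for finite dimensional $G$-graded algebras in characteristic zero'' that you invoke to descend to the ungraded exponent is false. The group algebra $KG$ with its tautological $G$-grading is a finite dimensional regular $G$-graded algebra with $\beta\equiv 1$; it is commutative, so $\exp(KG)=1$, while your own codimension count gives $c_{n}^{G}(KG)=|G|^{n}$ and $\exp^{G}(KG)=|G|$. This grading is non-minimal, so it does not contradict the theorem, but it destroys the general principle your proof rests on; note also that the theorem as stated does not even assume $A$ finite dimensional. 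Moreover, the paper obtains $\exp^{G}(A)=\exp(A)$ for minimal regular gradings only as a \emph{consequence} of Theorem \ref{charac.minimal} together with the structure theorem, so your argument reverses the logical order. The actual route in \cite{Eli1} goes through minimality: nondegeneracy of $\beta$ forces the twisted group algebra $K^{\alpha}G$, with $\alpha$ inducing $\beta$, to be central simple (Lemma \ref{charc.minimal.twisted} and Remark \ref{charc.minimal.twisted.1}), hence isomorphic to $M_{d}(K)$ with $d^{2}=|G|$ over the algebraically closed field $K$; since the multilinear identities of a regular algebra are determined by $\beta$ alone (Regev--Seeman), $A$ and $M_{d}(K)$ have the same codimension sequence, and $\exp(A)=\exp(M_{d}(K))=d^{2}=|G|$.
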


\begin{Example} 
Here we show two examples of minimal regular decompositions. 
\begin{enumerate}
    \item[(a)] Consider $E$ the Grassmann algebra of Example \ref{Grass.exe}. Then, since 
    \[
    \det M^{E}=\det\begin{pmatrix}
        1 & 1\\
        1 & -1
    \end{pmatrix}=-2,
    \]
    we conclude that the regular decomposition of $E$ is minimal. 
    \item[(b)] The regular $(\mathbb{Z}_{n}\times\mathbb{Z}_n)$-grading $M_{n}(K)$ of Example \ref{Pauli.grading.exe} has minimal regular decomposition because $\det M^{M_{n}(K)}=\pm n^{n^{2}}$.
\end{enumerate}
\end{Example}
Let  $\beta\colon G\times G\rightarrow K^{\ast}$ be a bicharacter of $G$. Consider the free $G$-graded algebra $K\langle X_{G}\rangle$ $($see \cite[Definition 3.3.3]{GZbook}$)$. Recall that the homogeneous degree of a monomial $w=x_{i_{1}}^{(g_{1})} \cdots x_{i_{n}}^{(g_{n})}\in K\langle X_{G}\rangle$ is defined as $\deg (w)=g_{1}+\cdots+g_{n}$.  Consider the following family of polynomials 
\[
x_{i}^{(g)}x_{j}^{(h)}-\beta(g,h)x_{j}^{(h)}x_{i}^{(g)},\quad \text{where}\quad g,h\in G,\quad i,j\in \mathbb{N}
\]
and denote by $I$ the $T_{G}$-ideal of $K\langle X_{G}\rangle$ generated by this family $($see \cite[Definition 3.3.4]{GZbook} for the definition of $T_{G}$-ideal $)$. 
\begin{Theorem} 
\label{free.reg.}
The algebra $R:=K\langle X_{G}\rangle/I$ has a regular $G$-grading with bicharacter $\beta$.
\end{Theorem}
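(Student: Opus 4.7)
My plan breaks into three pieces: setting up the grading on $R$, verifying the commutation relation (condition (ii)), and verifying nonvanishing of products (condition (i)).

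The grading is immediate: each generator $x_i^{(g)}x_j^{(h)}-\beta(g,h)x_j^{(h)}x_i^{(g)}$ of $I$ is homogeneous of $G$-degree $g+h$, so $I$ is a graded ideal and $R$ inherits a $G$-grading. For (ii), since $I$ is a $T_G$-ideal, substituting arbitrary homogeneous polynomials $a\in K\langle X_G\rangle_g$ and $b\in K\langle X_G\rangle_h$ into a defining generator of $I$ yields $ab-\beta(g,h)ba\in I$, which passes to $R$ as exactly the required commutation law.

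The substantive step is (i): given any tuple $(g_1,\dots,g_n)\in G^n$, I want to exhibit $a_k\in R_{g_k}$ with $a_1\cdots a_n\neq 0$. The natural candidates are $a_k=\overline{x_k^{(g_k)}}$, i.e. $n$ generators with pairwise distinct lower indices. To certify nonvanishing, my plan is to apply Bergman's Diamond Lemma to the rewriting system obtained from the defining relations. Fix any total order $\prec$ on the countable set $\{x_i^{(g)}\}$, and regard each relation as a rewriting rule $x_j^{(h)}x_i^{(g)}\rightsquigarrow \beta(h,g)\,x_i^{(g)}x_j^{(h)}$ whenever $x_j^{(h)}\succ x_i^{(g)}$. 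The only overlap ambiguities arise on strictly decreasing triples $x_k^{(k')}\succ x_j^{(h)}\succ x_i^{(g)}$, where a short computation using (\ref{eq.2}) and (\ref{eq.3}) shows that both orders of reduction produce the same output $\beta(k',h)\beta(k',g)\beta(h,g)\,x_i^{(g)}x_j^{(h)}x_k^{(k')}$, so the system is confluent.

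The Diamond Lemma then yields a $K$-basis of $R$ consisting of $\prec$-sorted monomials. Sorting the specific product $\overline{x_1^{(g_1)}\cdots x_n^{(g_n)}}$ gives a nonzero scalar times an ordered monomial in $n$ pairwise distinct generators, which is a basis element and hence nonzero in $R$; the distinctness of the generators precludes any vanishing that could otherwise arise when $\beta(g_k,g_k)=-1$. This yields (i) and completes the proof. The main obstacle is precisely the confluence verification, which reduces to one explicit bicharacter identity — everything else is bookkeeping.
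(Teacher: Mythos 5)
The paper itself gives no proof of this theorem (it is followed only by the remark ``Of course, its grading is regular''), so your argument must stand on its own. The grading on $R$ and the verification of condition (ii) are fine. The gap is in the Diamond Lemma step. Your rule set contains only the commutation rules $x_{j}^{(h)}x_{i}^{(g)}\rightsquigarrow \beta(h,g)\,x_{i}^{(g)}x_{j}^{(h)}$ for \emph{distinct} generators with $x_{j}^{(h)}\succ x_{i}^{(g)}$, so the two-sided ideal $I'$ it generates misses the diagonal members of the defining family: for $i=j$ and $g=h$ the listed polynomial is $(1-\beta(g,g))(x_{i}^{(g)})^{2}$, and since $\beta(g,g)=\pm 1$ by (\ref{eq.1}), in characteristic $0$ this places $(x_{i}^{(g)})^{2}$ in $I$ for every $g$ with $\beta(g,g)=-1$ (the Grassmann bicharacter of Example \ref{Grass.exe} is the basic instance, and such $g$ occur for arbitrary $\beta$ within the scope of the theorem). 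Since $I'$ is multihomogeneous and each of its generators involves two distinct letters, its component in multidegree $2e_{(i,g)}$ is zero, so $(x_{i}^{(g)})^{2}\notin I'$ and $I'\subsetneq I$. Hence your assertion that the Diamond Lemma ``yields a $K$-basis of $R$ consisting of $\prec$-sorted monomials'' is false --- $(x_{i}^{(g)})^{2}$ is a sorted monomial that vanishes in $R$ --- and what your confluence check actually produces is a basis of the strictly larger algebra $K\langle X_{G}\rangle/I'$. Nonvanishing there does not descend to the further quotient $R$, and the parenthetical appeal to distinctness of the generators cannot repair this: distinctness has to enter when you choose the rule set, not when you choose which alleged basis element to inspect.

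The repair is routine. Add the rules $(x_{i}^{(g)})^{2}\rightsquigarrow 0$ for each $g$ with $\beta(g,g)=-1$; the new overlap ambiguities ($x_{j}^{(h)}x_{i}^{(g)}x_{i}^{(g)}$, $x_{i}^{(g)}x_{i}^{(g)}x_{j}^{(h)}$ and $x_{i}^{(g)}x_{i}^{(g)}x_{i}^{(g)}$) all resolve to $0$ along both reduction paths, so the completed system is confluent and the irreducible words are the $\prec$-sorted monomials in which no generator with $\beta(g,g)=-1$ is repeated. Your multilinear word $x_{1}^{(g_{1})}\cdots x_{n}^{(g_{n})}$ is then a nonzero scalar multiple of an irreducible word, which gives condition (i). Two smaller points: you should record that the two-sided ideal generated by the listed family is already closed under graded substitutions (your own computation for (ii) shows exactly this), hence coincides with the $T_{G}$-ideal $I$ --- otherwise the Diamond Lemma would be presenting the wrong quotient for a second reason; and your three-letter confluence computation in fact uses no property of $\beta$ whatsoever, so the appeal to (\ref{eq.2}) and (\ref{eq.3}) there is not needed.
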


 The algebra $K\langle X_{G}\rangle/I$ will be called {\it relatively free} $G$-graded algebra with bicharacter $\beta$. Of course, its grading is regular.

\section{A motivation: \texorpdfstring{regular $(\mathbb{Z}_{2}\times \mathbb{Z}_{2})$-gradings}{[(Z2 x Z2)-graded algebras with regular grading}}

The main goal of this section is to describe the structure of finite dimensional $(\mathbb{Z}_{2} \times \mathbb{Z}_{2})$-graded algebras with regular grading assuming the neutral component is isomorphic to $K$. It is important to emphasize that, from now on, all associative algebras considered in this paper will be assumed to have a unit $1 \in A$, unless stated otherwise.

\begin{Remark} 
\label{remark.inverse}
Let $A$ be a finite dimensional algebra with unit. Given $0\neq x\in A$, if there exists $y\in A$ such that $xy=1$, then $yx=1$, that is, $y$ is the inverse of $x$. Indeed, since $xy=1$, then $xA=A$. Consider the operator $R_{x}\colon A\rightarrow A$ given by $R_{x}(a)=xa$, and remark if $R_{x}(a)=0$, then $xa=0$.  Thus
	\[
	a=axy=(ax)y=0, 
	\]
	and since $\dim_{K} A<\infty$, it follows that $R_{x}$ has an inverse. In particular, there exists $z\in A$ such that $zx=1$, then $y=z$. 
\end{Remark}

The following proposition is the key motivation for the entire paper:

\begin{Proposition}
Let $A$ be a finite dimensional algebra with a regular $(\mathbb{Z}_{2} \times \mathbb{Z}_{2})$-grading and bicharacter $\beta$. Assume that $A_{(0,0)}=K$. Then, there exists a cocycle $\alpha \in H^{2}(\mathbb{Z}_{2} \times \mathbb{Z}_{2}, K^{\ast})$ such that $A \cong K^{\alpha}(\mathbb{Z}_{2} \times \mathbb{Z}_{2})$, where $\alpha$ induces $\beta$.
\end{Proposition}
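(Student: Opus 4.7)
The plan is to reduce $A$ to a $G$-graded division algebra and then invoke Theorem~\ref{div.algebra.twisted}. Concretely, I would prove that $\dim_K A_g = 1$ for every $g \in \mathbb{Z}_2 \times \mathbb{Z}_2$, which forces every nonzero homogeneous element of $A$ to be invertible. Once Theorem~\ref{div.algebra.twisted} produces an isomorphism $A \cong K^{\alpha}(\mathbb{Z}_2 \times \mathbb{Z}_2)$, the verification that $\alpha$ induces $\beta$ is immediate: the bicharacter read off the basis $\{X_g\}$ of $K^{\alpha}G$ is exactly $\alpha(g,h)\alpha(h,g)^{-1}$, and a graded isomorphism preserves commutation between homogeneous components, so this coincides with $\beta$.

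The crux is to first show that $\beta(g,g) = 1$ for every $g \in G$. Since $2g = 0$ in $\mathbb{Z}_2 \times \mathbb{Z}_2$, the bicharacter relations (\ref{eq.2})--(\ref{eq.3}) give $\beta(g,g)^{2} = \beta(g, g+g) = \beta(g, 0) = 1$, so $\beta(g,g) = \pm 1$. Assume $\beta(g,g) = -1$ for some nonzero $g$. By the regularity condition applied to the tuple $(g,g)$, there exist $a_1, a_2 \in A_g$ with $a_1 a_2 \neq 0$; since $a_1 a_2 \in A_{(0,0)} = K$ we may write $a_1 a_2 = c \in K^{\ast}$. Remark~\ref{remark.inverse} then ensures that $a_1(a_2/c) = 1$ implies $(a_2/c) a_1 = 1$, so $a_2 a_1 = c$ as well. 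But $a_1 a_2 = \beta(g,g) a_2 a_1 = -c$, forcing $c = 0$, a contradiction.

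With $\beta(g,g) = 1$, any two elements of $A_g$ commute. For $g \neq 0$, regularity gives $a, b \in A_g$ with $ab \neq 0$; if both $a^2$ and $b^2$ vanished then $(a+b)^2 = 2ab \neq 0$, so some $x \in A_g$ has $x^2 \in K^{\ast}$ and is therefore invertible in $A$ by Remark~\ref{remark.inverse}. For any $y \in A_g$, the product $xy$ lies in $A_{(0,0)} = K$, hence $y \in Kx^{-1}$, proving $\dim_K A_g = 1$. Since the support of $A$ equals $G$, we conclude $\dim_K A = 4 = |G|$ and every nonzero homogeneous element is invertible; that is, $A$ is a finite-dimensional $G$-graded division algebra, and the argument is finished by Theorem~\ref{div.algebra.twisted} as outlined above. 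The main obstacle is the $\beta(g,g) = 1$ step, which crucially uses both regularity (to produce a nonzero product inside $A_{(0,0)}$) and the finite-dimensional one-sided inverse principle from Remark~\ref{remark.inverse}; the hypothesis $A_{(0,0)} = K$ is essential, as Example~\ref{Grass.exe} shows that regular $\mathbb{Z}_2$-gradings with $\beta(1,1) = -1$ do exist when the neutral component is larger.
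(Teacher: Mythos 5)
Your proof is correct, and it takes a genuinely different route from the paper's. The paper fixes a triple $e_{1}\in A_{(0,1)}$, $e_{2}\in A_{(1,0)}$, $e_{3}\in A_{(1,1)}$ with $e_{1}e_{2}e_{3}=1$ and runs a case analysis on whether each $e_{i}^{-1}$ lies in $\operatorname{span}_{K}\{e_{i}\}$, eliminating all cases but the one where $e_{i}^{-1}=e_{i}$ for all $i$, then writes down the multiplication table of $\{1,e_{1},e_{2},e_{3}\}$ and kills the remaining homogeneous basis vectors by multiplying by $e_{1}$. You instead isolate the real obstruction up front: since every $g$ satisfies $2g=0$, relations (\ref{eq.2})--(\ref{eq.3}) force $\beta(g,g)=\pm 1$, and your argument combining regularity on the tuple $(g,g)$ with the one-sided inverse principle of Remark~\ref{remark.inverse} cleanly rules out $\beta(g,g)=-1$ when $A_{(0,0)}=K$; the observation $(a+b)^{2}=2ab$ then produces an invertible element in each $A_{g}$ and gives $\dim_{K}A_{g}=1$ directly, after which Theorem~\ref{div.algebra.twisted} finishes exactly as in the paper. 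Your version is arguably more robust: it makes explicit why the hypothesis $A_{(0,0)}=K$ is needed (contrast Example~\ref{Grass.exe}) and is closer in spirit to the general Proposition~\ref{first.regular}, which for arbitrary $G$ pairs $A_{g}$ with $A_{-g}$ and thereby sidesteps the $\beta(g,g)$ issue that is unavoidable when every element is $2$-torsion. The paper's case analysis, on the other hand, yields the explicit multiplication table of the resulting twisted group algebra, which your argument leaves implicit in the cocycle $\alpha$. Both proofs verify that $\alpha$ induces $\beta$ in the same way, by reading the commutation of the basis $\{X_{g}\}$.
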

\begin{proof}
Let $k:=\dim A_{(0,1)}$, $l:=\dim A_{(1,0)}$, and $m:=\dim A_{(1,1)}$. Because of the regularity of $A$ and the fact that $A_{(0,0)}=K$, there exist $e_{1}\in A_{(0,1)}$, $e_{2}\in A_{(1,0)}$, and $e_{3}\in A_{(1,1)}$ such that $e_{1}e_{2}e_{3}=1$. According to Remark \ref{remark.inverse}, the left inverse of an element in $A$ exists if and only if its right inverse also exists; that is, $e_{1}^{-1}$, $e_{2}^{-1}$, and $e_{3}^{-1}$ do exist. Moreover, it can be verified 
\[
\begin{cases}
e_{1}^{-1}=e_{2}e_{3}\\
e_{2}^{-1}=\beta((0,1),(1,0))e_{1}e_{3}\\
e_{3}^{-1}=\beta((1,1),(1,1))e_{1}e_{2}
\end{cases}
\]
We will study now the multiplication table of the elements of the set 
\[
\{1\}\cup \{e_{j}^{\delta}\mid \delta\in \{\pm 1\},\quad 1\leq j\leq 3\}.
\]
There are several options to be considered:

\vspace{0.2 cm}

 1) $e_{i}^{-1} \notin \text{\rm span}_{K}\{e_{i}\}$, for all $1 \leq i \leq 3$. In this case, since $e_{1}^{2} \in A_{0}$, we have $e_{1}^{2} = 0$, as otherwise $e_{1}$ would be a scalar multiple of $e_{1}^{-1}$, leading to a contradiction. However, since $e_{1}$ is invertible, the equality $e_{1}^{2}=0$ would imply that $e_{1}=0$, that is a contradiction.


\vspace{0.2 cm}

2) $e_{i}=e_{i}^{-1}$, $e_{j}=e_{j}^{-1}$, and $e_{k}^{-1} \notin \text{\rm span}_{K}\{e_{k}\}$, for some distinct indices $i$, $j$, and $k$ in $\{1,2,3\}$. Without loss of generality, suppose $e_{1}^{-1}=e_{1}$, $e_{2}^{-1}=e_{2}$, and $e_{3}^{-1} \neq e_{3}$. By the same argument as in the previous case, we would have $e_{3}^{2}=0$, which is a contradiction since $e_{3}$ is invertible.

\vspace{0.2 cm}

3) Two elements are not in the space spanned by their inverses. The conclusion in this case is exactly the same as in item 1).

\vspace{0.2 cm}

4) $e_{1}^{-1}=e_{1}$, $e_{2}^{-1}=e_{2}$, and $e_{3}^{-1}=e_{3}$. In this case, if $B$ is the $(\mathbb{Z}_{2}\times \mathbb{Z}_{2})$-graded algebra, with regular grading, generated by $\{1,e_{1},e_{2},e_{3}\}$, then its multiplication is given in the table below:
\begin{center}
	\begin{tabular}{c|c c c c}
		$\times$ & 1 & $e_{1}$ & $e_{2}$ & $e_{3}$ \\ \hline
		1       & 1  & $e_{1}$  & $e_{2}$  & $e_{3}$  \\
		$e_{1}$ & $e_{1}$  & 1  & $e_{3}$  & $\beta((1,0),(0,1))e_{2}$  \\
		$e_{2}$ & $e_{2}$  & $\beta((1,0),(0,1))e_{3}$  & 1  & $e_{1}$ \\
		$e_{3}$ & $e_{3}$  & $e_{2}$  & $\beta((1,1),(1,0))e_{1}$ & 1 \\
	\end{tabular}
\end{center}
Therefore there exists $\alpha\in H^{2}(\mathbb{Z}_{2}\times \mathbb{Z}_{2},K^{\ast})$ such that $B\cong K^{\alpha}(\mathbb{Z}_{2}\times \mathbb{Z}_{2})$. Moreover, since $\beta$ is the bicharacter of $B$, it follows
\[
\beta(g,h)=\alpha(g,h)\alpha(h,g)^{-1},\quad \text{for all}\quad g, h\in \mathbb{Z}_{2}\times \mathbb{Z}_{2}.
\]
Now, take a basis of $A$ formed by homogeneous elements such that
\[
\mathcal{A}=\{1,e_{1},e_{2},e_{3}\}\cup \mathcal{B}_{(0,1)}\cup \mathcal{B}_{(1,0)}\cup \mathcal{B}_{(1,1)},
\]
where 
	\[
	\mathcal{B}_{(0,1)}:=\{a_{j}^{(0,1)}\mid 1\leq j\leq k-1\}
	\]
	\[
	\mathcal{B}_{(1,0)}:=\{a_{j}^{(1,0)}\mid 1\leq j\leq l-1\}
	\]
	\[
	\mathcal{B}_{(1,1)}:=\{a_{j}^{(1,1)}\mid 1\leq j\leq m-2\}.
	\]
By the uniqueness of the inverse of $e_{1}$, we have
\[
\begin{cases}
e_{1}a_{i}^{(0,1)}=0\\
e_{1}a_{j}^{(1,0)}=0\\
e_{1}a_{k}^{(1,1)}=0
\end{cases}
\]
and multiplying all the above equations on the left by $e_{1}$, it follows that for all $i$, $j$ and $k$, we get
\[
a_{i}^{(0,1)}=a_{j}^{(1,0)}=a_{k}^{(1,1)}=0.
\]
 As a consequence, we have $B=A$. Therefore, by Theorem \ref{div.algebra.twisted}, there exists a cocycle $\alpha \in H^{2}(\mathbb{Z}_{2} \times \mathbb{Z}_{2}, K^{\ast})$ such that $A \cong K^{\alpha}(\mathbb{Z}_{2} \times \mathbb{Z}_{2})$.
\end{proof}

\section{Description of the structure of finite dimensional regular gradings with complete support }

We begin this section with a simple yet rather subtle remark. Before proceeding, recall that a $G$-graded algebra $C$ is called $\beta$-\textsl{commutative} if for every $g$, $h \in G$, and $c_{g} \in C_{g}$ and $c_{h} \in C_{h}$, we have $c_{g}c_{h}=\beta(g,h)c_{h}c_{g}$.

\begin{Remark}
\label{trivial.case}
Suppose $G$ is the trivial abelian group $\{0\}$. In this case, a bicharacter $\beta\colon G\times G\rightarrow K^{\ast}$ is necessarily trivial, that is, $\beta(0,0)=1$. Hence, any $G$-graded $\beta$-commutative algebra must be commutative. Therefore, any nonzero $\beta$-commutative algebra $A$ with grading $A_{0}=A$ is trivially regular whose regular decomposition is minimal because $M^{A}=(1)$. Conversely, every algebra with a regular grading over $G=\{0\}$ must be a nonzero commutative algebra and automatically it has a minimal regular decomposition. 
\end{Remark}
Because of the above remark, in this and in the subsequent sections of the paper,  $G$ will denote a non-trivial finite abelian group, i.e. $|G|>1$. In this section we will describe algebras with regular gradings with a restrictive condition, namely \textsl{complete support}.

\begin{Proposition}
\label{first.regular}
Let $A$ be a finite dimensional $G$-graded algebra with regular grading. If $A_{0}=K$, then the algebra $A$ is isomorphic to a twisted group algebra. 
\end{Proposition}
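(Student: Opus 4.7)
The plan is to show that $A$ is a $G$-graded division algebra and then quote Theorem \ref{div.algebra.twisted} to conclude that $A$ is isomorphic to a twisted group algebra. Since we already have the machinery (Remark \ref{remark.inverse}, the hypothesis $A_0 = K$, and regularity), this should be much cleaner than the $(\mathbb{Z}_2\times\mathbb{Z}_2)$ case treated above, because we will not need to enumerate cases or work with an explicit multiplication table.

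First I would show that for every $g\in G$, the homogeneous component $A_g$ contains an invertible element. Fix $g \in G$. Regularity (condition (i) of Definition \ref{def.reg.}, applied to the pair $(g,-g)$) provides $a \in A_g$ and $b \in A_{-g}$ with $ab \neq 0$. Since $ab$ lies in $A_0 = K\cdot 1$, we can write $ab = \lambda \cdot 1$ for some $\lambda \in K^*$. Then $a \cdot (\lambda^{-1} b) = 1$, and Remark \ref{remark.inverse} upgrades this one-sided inverse to a two-sided inverse, giving $a^{-1} = \lambda^{-1} b \in A_{-g}$.

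Next I would upgrade this to the statement that \emph{every} nonzero element of $A_g$ is invertible, by showing $\dim A_g = 1$. Let $a_g \in A_g$ be the invertible element just produced, and let $c \in A_g$ be arbitrary. Then $c \cdot a_g^{-1} \in A_g \cdot A_{-g} \subseteq A_0 = K$, so $c \cdot a_g^{-1} = \mu \cdot 1$ for some $\mu \in K$, and hence $c = \mu a_g$. Thus $A_g = K a_g$ is one-dimensional and every nonzero homogeneous element is a nonzero scalar multiple of an invertible element, hence invertible. Consequently $A$ is a finite dimensional $G$-graded division algebra.

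Finally, an application of Theorem \ref{div.algebra.twisted} yields that $A$ is isomorphic, as a $G$-graded algebra, to a twisted group algebra $K^{\tau}G$ for some $\tau \in H^2(G,K^*)$. I do not anticipate a serious obstacle in this argument: the only place where care is needed is the passage from a one-sided inverse to a two-sided inverse, and that is exactly what Remark \ref{remark.inverse} was set up to handle (using finite dimensionality and the existence of a unit).
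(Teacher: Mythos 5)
Your proof is correct and follows essentially the same route as the paper's: both use regularity together with $A_0=K$ to produce an invertible element in each component, deduce that each $A_g$ is one-dimensional (the paper does this by showing any basis vector independent of $a_g$ must vanish, you by writing $c=\mu a_g$ directly — the same idea), and then invoke Theorem \ref{div.algebra.twisted}. Your phrasing via "every homogeneous element is invertible, hence $A$ is a graded division algebra" is a slightly cleaner packaging of the identical argument.
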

\begin{proof} 
    For every $g\in G$, we set $k_{g}:=\dim A_{g}$. Given the pair $(g,-g)\in G^{2}$, $g\in G\setminus\{0\}$, by regularity there exist $a_{g}\in A_{g}$ and $a_{-g}\in A_{-g}$ such that $a_{g}a_{-g}\neq 0$. Since $A_{0}=K$, without loss of generality, up to a nonzero scalar, we can assume  $a_{g}a_{-g}=1$.

For each $g \in G$, where $k_{g}>1$, we can choose a basis $\mathcal{B}_{g}$ of $A_{g}$ that includes $a_{g}$, that is, there exist $b_{g,j} \in A_{g}$, $1 \leq j \leq k_{g}-1$, such that
\[
\mathcal{B}_{g}=\{a_{g}\}\cup \mathcal{W}_{g},\quad \text{where}\quad \mathcal{W}_{g}=\{b_{g,j}\mid 1\leq j\leq k_{g}-1\}
\]
is a basis of $A_{g}$. If $k_{g}=1$ we simply set $\mathcal{B}_{g}=\{a_{g}\}$.

It follows that the disjoint union
\[
\mathcal{A}=\bigcup_{g\in G}\mathcal{B}_{g}
\]
is a homogeneous basis of $A$. Now, consider the set
\[
\Omega:=\{g\in G\setminus\{0\}\mid k_{g}>1\}.
\]
Suppose that $\Omega\neq \emptyset$, and let $h\in \Omega$. Given $1\leq j\leq k_{h}-1$, consider $b_{h,j}\in \mathcal{W}_{h}$; of course, $a_{-h}b_{h,j}\in A_{0}$ and since $b_{h,j}$ and $a_{h}$ are linearly independent and $a_{h}$ is invertible, it follows  that $a_{-h}b_{h,j}=0$. By the fact that $a_{-h}$ is invertible we have $b_{h,j}=0$.


By the arbitrariness of $1\leq j\leq k_{h}-1$, we conclude that $\mathcal{W}_{h}=\{0\}$, so $\Omega=\emptyset$. It implies  that $\{a_{g}\mid g\in G\}$ is a basis of $A$, then by Theorem \ref{div.algebra.twisted}, $A$ is isomorphic to a twisted group algebra, that is, there exists a cocycle $\alpha \in H^{2}(G,K^{\ast})$ which induces the bicharacter of $A$ and $A\cong K^{\alpha}G$.
\end{proof}

Let $G=\{0,g_1, \ldots,g_k\}$ be a finite abelian group. A theorem of Miller, proved in 1903, \cite{Miller}, states that $g_1+\cdots+g_k=0$ unless $G$ contains exactly one element of order 2. In the following example we put the condition $g_1+\cdots+g_k=0$. While this may seem to be a restriction, it turns out it is harmless. This is due to the fact that the groups we consider either have no elements of order 2 or have more than one such element.

\begin{Proposition}
\label{parallel.first}
Let $G=\{0,g_{1},\ldots, g_{k}\}$ and let $g_{1}+\cdots +g_{k}=0$. Suppose that $A$ is a $G$-graded $\beta$-commutative algebra with $A_{0}=K$. Assume that  $x_{i_{1}}^{(g_{1})}\cdots x_{i_{k}}^{(g_{k})}\notin T_{G}(A)$, then $A$ is a twisted group algebra.  
\end{Proposition}

\begin{proof}
    Since $x_{i_{1}}^{(g_{1})}\cdots x_{i_{k}}^{(g_{k})}\notin T_{G}(A)$, there exist $a_{g_{1}}\in A_{g_{1}}$, \dots, $a_{g_{k}}\in A_{g_{k}}$ such that up to a constant, we can assume $a_{g_{1}}\cdots a_{g_{k}}=1$. It follows that there exist $\lambda_{g_{1}}$,\dots, $\lambda_{g_{k}}\in K$ such that 
\[
a_{g_{j}}^{-1}=\lambda_{g_{j}}a_{g_{1}}\cdots a_{g_{j-1}}\cdot \widehat{a_{g_{j}}}\cdot a_{g_{j+1}}\cdots a_{g_{k}},
\]
where $\widehat{a_{g_j}}$ represents the omission of the element $a_{g_j}$ in the product. Note that $a_{-g_{j}}=t_{j}a_{g_{j}}^{-1}$, for some $t_{j}\in K$. Indeed, since $a_{-g_{j}}\in A_{-g_{j}}$, we have $a_{-g_{j}}a_{g_{j}}\in A_{0}\setminus\{0\}$. Thus, since $A_{0}=K$, we get $a_{g_{j}}^{-1}=t_{j}a_{-g_{j}}$, for some $t_{j}\in K^{\ast}$. 

Considering the linearly independent set $\mathcal{C}=\{1,a_{g_{1}},\ldots, a_{g_{k}}\}$, we can complete $\mathcal{C}$ to a homogeneous basis $\mathcal{A}=\mathcal{C}\cup \Big(\bigcup_{j=1}^{k}\mathcal{B}_{j}\Big)$, where $\mathcal{B}_{j}=\{b_{g_{j},l}\mid 1\leq l\leq d_{j}-1\}$ 
and $d_{j}=\dim A_{g_{j}}$. 

We claim that $\mathcal{B}_{j}=\emptyset$ for every $1\leq j\leq k$. Note that if $\mathcal{B}_{j}\neq \emptyset$, for some $1\leq j\leq k$, then there exists $b_{g_{j},l}\neq 0$ in $\mathcal{B}_{j}$. Again, $a_{-g_{j}}b_{g_{j},l}=b_{g_{j},l}a_{-g_{j}}=0$, hence  $b_{g_{j},l}=0$, that is a contradiction. Consequently, $\mathcal{B}_{j} = \emptyset$ for all $1\leq j \leq k$; therefore, $\mathcal{A}=\mathcal{C}$ and $\dim A=k+1$. Now the proof follows by Theorem \ref{div.algebra.twisted}.
\end{proof}

We note that Proposition \ref{first.regular} is a particular case of Proposition \ref{parallel.first}. Furthermore, we also have the following easy consequence.

\begin{Corollary} Let $A$ be a $(\mathbb{Z}_{n}\times \mathbb{Z}_{n})$-graded $\beta$-commutative algebra satisfying the conditions of the last proposition. Then $A$ is a twisted group algebra.
\end{Corollary}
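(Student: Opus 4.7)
The plan is to deduce the corollary as a direct specialization of Theorem \ref{parallel.first}. The only hypothesis of that theorem which is not explicitly included in the statement of the corollary is the arithmetic condition $g_{1}+\cdots+g_{k}=0$ on the enumeration of the non-identity elements of $G=\mathbb{Z}_{n}\times\mathbb{Z}_{n}$. So the entire content of the argument reduces to verifying this identity for this particular group; once it is in hand, Theorem \ref{parallel.first} applies verbatim and yields that $A$ is a twisted group algebra.

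For the verification I would rely on Miller's theorem, quoted in the paragraph preceding Theorem \ref{parallel.first}: in a finite abelian group the sum of all elements equals zero unless the group contains exactly one element of order two. Thus I first count the elements of order two in $\mathbb{Z}_{n}\times\mathbb{Z}_{n}$. If $n$ is odd there are none; if $n$ is even there are exactly three, namely $(n/2,0)$, $(0,n/2)$, and $(n/2,n/2)$. In neither case does $G$ contain precisely one element of order two, so Miller's theorem applies and gives $\sum_{g\in G}g=0$. Since the neutral element contributes zero to the sum, this is exactly $g_{1}+\cdots+g_{k}=0$.

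I do not expect any real obstacle: the corollary is a bookkeeping observation that, for the particular group $\mathbb{Z}_{n}\times\mathbb{Z}_{n}$, the hypothesis on the sum of the non-identity elements is automatic, and the desired conclusion then follows from Theorem \ref{parallel.first}. Should one prefer to bypass the appeal to Miller's theorem, the same fact is checked directly: $\sum_{(a,b)\in\mathbb{Z}_{n}\times\mathbb{Z}_{n}}(a,b)=(nT,nT)$ with $T=n(n-1)/2$, and both coordinates are divisible by $n$ and hence vanish in $\mathbb{Z}_{n}$.
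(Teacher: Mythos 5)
Your proposal is correct and follows exactly the route the paper intends: the paper gives no separate proof of the corollary, but the paragraph preceding Theorem \ref{parallel.first} explicitly invokes Miller's theorem and notes that the relevant groups have either no elements of order $2$ or more than one, which is precisely your verification for $\mathbb{Z}_{n}\times\mathbb{Z}_{n}$. Both your count of the order-two elements and your direct computation of the sum are accurate, so the corollary follows from Theorem \ref{parallel.first} as you describe.
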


\begin{Definition}
Let $Q$ be a finite abelian group, and let $\alpha \in H^{2}(Q,K^{\ast})$ be a cocycle. We say that $x \in Q$ is regular if $\alpha(x,s) = \alpha(s,x)$ for every $s \in Q$. 
\end{Definition}
\begin{Lemma}
\label{charc.minimal.twisted}
\cite[Theorem 2.2]{Karpilovsky} 
Let $Q_{0}(\alpha)$ denote the subgroup of $Q$ consisting of all  regular elements. The following conditions are equivalent:
\begin{enumerate}
    \item $Q_{0}(\alpha) = \{0\}$.
    \item $K^{\alpha}Q$ is central simple. 
\end{enumerate}
\end{Lemma}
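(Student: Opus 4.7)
The plan is to pin down the center of $K^{\alpha}Q$ by a direct computation, and then use semisimplicity (which is automatic in characteristic $0$) to upgrade the equivalence ``center equals $K$'' into ``central simple''.

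First I would compute $Z(K^{\alpha}Q)$ explicitly. Writing an arbitrary element as $z=\sum_{g\in Q}\lambda_{g}X_{g}$ and imposing $zX_{h}=X_{h}z$ for every $h\in Q$, the relation $X_{g}X_{h}=\alpha(g,h)X_{g+h}$ gives the condition
\[
\lambda_{g}\,\alpha(g,h)=\lambda_{g}\,\alpha(h,g)\quad\text{for all }g,h\in Q.
\]
Hence $\lambda_{g}\ne 0$ forces $\alpha(g,h)=\alpha(h,g)$ for every $h$, i.e.\ $g\in Q_{0}(\alpha)$. Conversely, each $X_{g}$ with $g\in Q_{0}(\alpha)$ commutes with all $X_{h}$ and so lies in the center. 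Therefore
\[
Z(K^{\alpha}Q)=\operatorname{span}_{K}\{X_{g}\mid g\in Q_{0}(\alpha)\},
\]
and in particular $Z(K^{\alpha}Q)=K$ if and only if $Q_{0}(\alpha)=\{0\}$.

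Next I would invoke semisimplicity. Since $Q$ is a finite abelian group and $\operatorname{char}K=0$, a standard averaging argument (or Maschke-type theorem for twisted group algebras) shows that $K^{\alpha}Q$ is semisimple. Being a finite-dimensional semisimple algebra over the algebraically closed field $K$, by Wedderburn it decomposes as $K^{\alpha}Q\cong \prod_{i}M_{n_{i}}(K)$, and consequently $Z(K^{\alpha}Q)\cong K^{r}$, where $r$ is the number of simple summands. Thus $K^{\alpha}Q$ is simple if and only if $Z(K^{\alpha}Q)=K$.

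Combining these two observations yields the equivalence: $Q_{0}(\alpha)=\{0\}$ iff $Z(K^{\alpha}Q)=K$ iff $K^{\alpha}Q$ is simple with center $K$, that is, central simple. The only mildly delicate point is the semisimplicity input, but this is standard for twisted group algebras of finite groups in characteristic zero; without this, one would only get the purely algebraic statement about the center. Everything else reduces to the bookkeeping in the first display.
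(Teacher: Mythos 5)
Your argument is correct. Note that the paper does not prove this lemma at all --- it is quoted from Karpilovsky --- and what you have written is essentially the standard proof of that cited result: comparing coefficients in $zX_h=X_hz$ shows $Z(K^{\alpha}Q)=\operatorname{span}_K\{X_g\mid g\in Q_0(\alpha)\}$ (valid here because $Q$ is abelian, so $g+h=h+g$ and the spanning set $\{X_h\}$ suffices to test centrality), and the Maschke-type averaging argument for twisted group algebras in characteristic $0$ is indeed standard, so Wedderburn over the algebraically closed field $K$ turns ``center $=K$'' into ``central simple''. The only direction that needs semisimplicity is $Q_0(\alpha)=\{0\}\Rightarrow$ simple; the converse follows from the center computation alone, as you implicitly use.
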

\begin{Remark}
\label{charc.minimal.twisted.1}
In the context of Lemma \ref{charc.minimal.twisted}, it follows from \cite[Proposition 30]{LPK} that the regular decomposition of $K^{\alpha}Q$ is minimal if and only if $Q_{0}(\alpha)=\{0\}$. 
    
\end{Remark}

Recall that if $R=\oplus_{g\in G}R_{g}$ is a $G$-graded algebra and $V$ is a $G$-graded vector space, then  $V$ is a left graded $R$-module if there exists a linear action of $R$ on $V$ satisfying $R_{g}V_{h}\subseteq V_{gh}$, for every $g$, $h\in G$. We define the $G$-graded Jacobson radical of $R$, denoted by $J^{\mathrm{gr}}(R)$, as the intersection of all annihilators of its graded irreducible left $R$-modules. It can be shown that $J^{gr}(R)$ is a $G$-graded ideal of $R$. Now, since $K$ is an algebraically closed field of characteristic $0$ and $|G| < \infty$, it follows that $J(R)$ is a homogeneous ideal of $R$, i.e, $J^{\mathrm{gr}}(R)=J(R)$ \cite[Theorem 4.4]{SusanMontgomery}. Additionally, by \cite[Corollary 4.2]{SusanMontgomery}, we have

\begin{equation}
\label{eq.radical}
    J(R_{0})=J(R)_{0}=R_{0}\cap J(R).
\end{equation}

\begin{Remark}
    Here we recall that it is not easy to prove that $J(R)$ is a homogeneous ideal. This has been established in some particular albeit important cases, see for example \cite{diniz2024gradings}. It is not known, as of now, whether $J(R)$ is homogeneous for an arbitrary grading on $R$. 
\end{Remark}

We recall an important result about $G$-graded simple algebras due to Bahturin, Zaicev and Sehgal.

\begin{Theorem}\cite[Theorem 3]{bahturin2005finite}
\label{bahturin.zaicev}
 Let $R$ be a finite dimensional $Q$-graded simple algebra where $Q$ is a finite group with identity element $e$. Then there exist a finite subgroup $H$ of $Q$, a cocycle $\alpha\in H^{2}(H,K^{\ast})$, an integer $r\geq 1$ and an $r$-tuple $(q_{1},\ldots, q_{r})\in Q^{r}$ such that $A$ is $Q$-graded isomorphic to $\Lambda=K^{\alpha}H\otimes M_{r}(K)$ (here $\Lambda_{q}=\text{span}_{K}\{X_{h}\otimes e_{i,j}\mid q=q_{i}^{-1}hq_{j}\}$). Moreover, $\{X_{h}\mid h\in H\}$ is the canonical basis of $K^{\alpha}H$ and $e_{i,j}\in M_{r}(K)$ is the $(i,j)$ elementary matrix.  In particular, the idempotents $X_{0}\otimes e_{i,i}$, as well as the identity of $A$, are homogeneous of degree $e\in Q$.
\end{Theorem}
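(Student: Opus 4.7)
The plan is to mimic the proof of the classical Artin--Wedderburn theorem in the graded setting. First, I would show that $R$ is graded semisimple: the graded Jacobson radical $J^{\mathrm{gr}}(R)$ is a graded two-sided ideal, which by graded simplicity must vanish (in characteristic $0$ with $|Q|<\infty$ one also has $J^{\mathrm{gr}}(R)=J(R)$ by \cite[Theorem 4.4]{SusanMontgomery}). Matching homogeneous components in $1\cdot 1=1$ yields $1\in R_e$.

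Next, I would apply the graded version of the Wedderburn structure theorem. Since $R$ is graded simple and finite-dimensional, it admits (up to shift and isomorphism) a unique graded simple left module $M$, and the natural map $R\to \mathrm{End}_{D}^{\mathrm{gr}}(M)^{\mathrm{op}}$ is a $Q$-graded isomorphism, where $D=\mathrm{End}_R^{\mathrm{gr}}(M)$ is a finite-dimensional graded division algebra. By Theorem \ref{div.algebra.twisted}, $D\cong K^{\alpha}H$ for some finite subgroup $H\le Q$ and some cocycle $\alpha\in H^2(H,K^{\ast})$.

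Decomposing the graded right $D$-module $M$ as a direct sum of shifted copies of $D$, say $M\cong D[q_1]\oplus\cdots\oplus D[q_r]$ for suitable $q_1,\ldots,q_r\in Q$, a direct computation of $\mathrm{End}_D^{\mathrm{gr}}(M)$ in terms of matrix units and the basis $\{X_h\}$ of $K^{\alpha}H$ produces a $Q$-graded isomorphism $R\cong K^{\alpha}H\otimes M_r(K)$ in which the homogeneous component of degree $q$ is exactly $\mathrm{span}_K\{X_h\otimes e_{i,j}\mid q=q_i^{-1}hq_j\}$. The final assertion that $X_0\otimes e_{i,i}$ and the identity are homogeneous of degree $e$ follows from $e=q_i^{-1}\cdot 0\cdot q_i$.

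The main obstacle is the graded Wedderburn step: verifying that every finite-dimensional graded simple algebra over an algebraically closed field admits a graded simple module realizing the isomorphism with an endomorphism algebra over a graded division ring. One cannot directly mimic the ungraded argument via primitive central idempotents, because when the cocycle $\alpha$ is degenerate these idempotents need not be homogeneous and the ungraded simple blocks need not be graded ideals. Once this structural step is granted, Theorem \ref{div.algebra.twisted} handles the identification $D\cong K^{\alpha}H$, and the explicit grading via shifts $(q_1,\ldots,q_r)$ reduces to tracing through the matrix-unit description of $\mathrm{End}_D^{\mathrm{gr}}(M)$.
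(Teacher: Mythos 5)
This theorem is quoted verbatim from \cite[Theorem 3]{bahturin2005finite}; the paper gives no proof of its own, so there is nothing internal to compare against. Measured against the argument in the cited source, your outline follows essentially the same route: reduce to a graded Wedderburn description $R\cong \mathrm{End}_{D}(M)$ for a graded simple module $M$ over a graded division algebra $D$, identify $D$ with a twisted group algebra $K^{\alpha}H$ via Theorem \ref{div.algebra.twisted}, and read off the grading from the degrees $q_{1},\ldots,q_{r}$ of a homogeneous $D$-basis of $M$, which gives $\deg(X_{h}\otimes e_{i,j})=q_{i}^{-1}hq_{j}$ and in particular degree $e$ for $X_{0}\otimes e_{i,i}$ and for the identity.

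The one place where you stop short is the ``graded Wedderburn step,'' which you flag as the main obstacle but do not carry out. The worry you raise --- that primitive central idempotents need not be homogeneous --- is legitimate but also beside the point, because the standard proof never uses them. Instead one takes a minimal graded left ideal $M$ of $R$ (which exists by finite dimensionality); its annihilator is a graded two-sided ideal, hence zero by graded simplicity, so $R$ acts faithfully on $M$; the graded Schur lemma shows $D=\mathrm{End}_{R}(M)$ is a graded division algebra; and the (graded) density theorem together with $\dim_{K}M<\infty$ gives $R\cong\mathrm{End}_{D}(M)$. This is a routine adaptation of the ungraded argument, so your proposal is correct in substance once that paragraph is written out; as it stands, the central structural claim is asserted rather than proved. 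You should also note that finiteness of $H$ is automatic ($H$ is the support of $D$, a subgroup of the finite group $Q$), and that the statement's ``$A$'' should read ``$R$'' --- a typo inherited from the paper, not from you.
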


We have the next result.

\begin{Proposition} 
\label{prop.aux}
Let $A$ be a finite dimensional $G$-graded simple $\beta$-commutative algebra.  Then $A$ is isomorphic to a twisted group algebra $K^{\alpha}H$, where $\alpha\in H^{2}(H,K^{\ast})$ and $H$ is a subgroup of $G$. 
\end{Proposition}
\begin{proof} By Theorem \ref{bahturin.zaicev}, the algebra $A$ is $G$-graded isomorphic to the $G$-graded algebra $\Lambda=K^{\alpha}H\otimes M_{r}(K)$. To prove the statement, it is enough to show that $r=1$. 


Suppose by contradiction that $r>1$ and let $1\leq i\neq j\leq r$. Then there exist $g$, $g'\in G$, $h$, $h'\in H$ such that $X_{h}\otimes e_{i,i}\in \Lambda_{g}$ and $X_{h'}\otimes e_{i,j}\in \Lambda_{g'}$. Notice that  
\[
(X_{h}\otimes e_{i,i})(X_{h'}\otimes e_{i,j})=X_{h}X_{h'}\otimes e_{i,i}e_{i,j}=\alpha(h,h')X_{h+h'}\otimes e_{i,j}\neq 0.
\]
On the other hand,
\[
(X_{h'}\otimes e_{i,j})(X_{h}\otimes e_{i,i})=X_{h'}X_{h}\otimes e_{i,j}e_{i,i}=0
\]
and this contradicts the $\beta$-commutativity of $\Lambda$. Then $r=1$, and so $H=\operatorname{Supp}(A)$. Therefore 
\[
\Lambda= K^{\alpha}H\otimes M_{r}(K)\cong K^{\alpha}H\otimes K\cong K^{\alpha}H
\]
 that is, $A\cong K^{\alpha}H$ (as $G$-graded algebras).
\end{proof}

Recall the characterization of the PI-exponent of $A$, denoted by $\exp(A)$.  
Consider the Wedderburn-Malcev decomposition of $A=B_{1}\oplus B_{2}\oplus \cdots \oplus B_{k}+J(A)$. Then, $\exp(A)$ is equal to the maximal value of the sum of the dimensions 
\[
\dim B_{i_{1}}+\cdots +\dim B_{i_{n}}
\]
where $B_{i_{1}}$,  \dots, $B_{i_{n}}$, are distinct and satisfy the condition
\[
B_{i_{1}}J(A)B_{i_{2}}J(A)\cdots J(A)B_{i_{n}}\neq 0. 
\]
We observe that, in the definition of the PI-exponent the dimensions of all simple subalgebras of $R$ are also taken into account.

Now we define what a complete support is for algebras with regular gradings.

\begin{Definition} Let $B$ be a finite dimensional $G$-graded algebra with regular grading. We say $B$ has \textsl{complete support} if any twisted group algebra contained in $B/J(B)$ is isomorphic to $ K^{\alpha}G$, for some $\alpha\in H^{2}(G,K^{\ast})$. 
\end{Definition}

\begin{Proposition}
\label{sum.twisted}
Let $A$ be a finite dimensional algebra with regular $G$-grading with bicharacter $\beta$. Then, there exist graded simple subalgebras $D_{1}$, \dots, $D_{k}$ of $A$, such that $D_{i}$ is isomorphic (as a graded algebra) to a twisted group algebra and $A=D_{1}\oplus \cdots \oplus D_{k}+J(A)$. Moreover, we have the following:
\begin{enumerate}
    \item[(1)] There exists $1\leq i\leq k$ such that $D_{i}$ is isomorphic to $K^{\alpha}G$, where $\alpha\in H^{2}(G,K^{\ast})$.
    
    \item[(2)] If $A$ has complete support, then
    \[
    A\cong K^{\alpha}G\oplus \cdots \oplus K^{\alpha}G+J(A)
    \]
     where $J(A)$ is the Jacobson radical of $A$, and the cocycle $\alpha$ induces $\beta$.   
\end{enumerate}
\end{Proposition}
\begin{proof}

 It follows from the graded Wedderburn-Malcev theorem \cite[Lemma 2.2]{bahturin2004identities} that the Jacobson radical $J=J(A)$ is graded with respect to the $G$-grading and there exists a semisimple subalgebra $D$ that is homogeneous in the $G$-grading such that  $A=D+J$ as a direct sum of vector subspaces. Moreover, $D$ can be decomposed as the direct sum $D=D_{1}\oplus D_{2}\oplus \cdots \oplus D_{k}$ of graded two-sided ideals of $D$ and each $D_{j}$ is a $G$-graded simple algebra. We also note that by construction $D_{i}D_{j}=0$ for $i\neq j$. By Proposition \ref{prop.aux} it follows that $D_{i}\cong K^{\alpha_{i}}H_{i}$ (as $G$-graded algebras), where $H_{i}=\operatorname{Supp}(D_{i})$ is a subgroup of $G$, for all $1\leq i\leq k$. Of course $J$ cannot have a regular grading because it is nilpotent. Therefore, $D$ must have a $G$-graded regular grading, and since $K^{\alpha_{i}}H_{i}$ it is not a $G$-graded regular algebra if $H_{i}$ is a proper subgroup of $G$, we conclude there exists $1\leq i\leq k$ such that $D_{i}\cong K^{\alpha_{i}}G$.

Now, suppose $A$ has complete support. In this case $D_{i}\cong K^{\alpha_{i}}G$, for all $1\leq i\leq k$. Given any $1\leq i\leq k$, let $\{X_{h}^{(i)}\mid h\in G\}$ be a basis of $K^{\alpha_{i}}G$. It is obvious that if $a_{g}:=\sum_{i=1}^{k}X_{g}^{(i)}$ and $a_{h}=\sum_{j=1}^{k}X_{h}^{(j)}$, then
 \[
 a_{g}a_{h}=\big(\sum_{i=1}^{k}X_{g}^{(i)}\big)\big(\sum_{j=1}^{k}X_{h}^{(j)}\big)=\sum_{i=1}^{k}\alpha_{i}(g,h)X_{g+h}^{(i)}
 \]
 since $X_{g}^{(i)}X_{h}^{(j)}=0$ if $i\neq j$. On the other hand we have
 \[
 a_{h}a_{g}=\big(\sum_{j=1}^{k}X_{h}^{(j)}\big)\big(\sum_{i=1}^{k}X_{g}^{(i)}\big)=\sum_{i=1}^{k}\alpha_{i}(h,g)X_{g+h}^{(i)}.
 \]
However we know that $a_{g}a_{h}=\beta(g,h)a_{h}a_{g}$, then for each $1\leq i\leq k$ we have
\[
\alpha_{i}(g,h)=\beta(g,h)\alpha_{i}(h,g)
\]
that is $\beta(g,h)=\alpha_{i}(g,h)\alpha_{i}(h,g)^{-1}$. Finally, we will show that $\alpha_{1} = \cdots = \alpha_{k} = \alpha$. It is well known (see \cite[Lemma 31]{Eli1}) that for every $1 \leq i \neq j \leq k$ one has 
\[
T_{G}(K^{\alpha_{i}}G)=T_{G}(K^{\alpha_{j}}G), 
\]
 since both $\alpha_{i}$ and $\alpha_{j}$ induce $\beta$. Suppose that $\alpha_{i} \neq \alpha_{j}$ for some $i \neq j$, then there exist $g$, $h \in G$ such that $\alpha_{i}(g, h) \neq \alpha_{j}(g, h)$. On the other hand, by definition, the polynomial 
\[
f = x_{m}^{(g)} x_{m_{1}}^{(h)} - \alpha_{i}(g, h) x_{m_{2}}^{(g+h)} \in K\langle X_{G}\rangle
\]
must be in $T_{G}(K^{\alpha_{i}}G)$, for every $m$, $m_{1}$, $m_{2} \in \mathbb{N}$. In particular, $f \in T_{G}(K^{\alpha_{j}}G)$. Now, consider the basis $\{X_{g}^{(j)} \mid g \in G\}$ of $K^{\alpha_{j}}G$, then
\[
f(X_{g}^{(j)}, X_{h}^{(j)}, X_{g+h}^{(j)})=0.
\]
 In other words,
 \[
 X_{g}^{(j)}X_{h}^{(j)}=\alpha_{i}(g,h)X_{g+h}^{(j)}.
 \]
  This means $\alpha_{j}(g,h)X_{g+h}^{(j)}=\alpha_{i}(g,h) X_{g+h}^{(j)}$, that is, $\alpha_{i}(g,h)=\alpha_{j}(g,h)$ which is a contradiction. It turns out $\alpha_{1}=\cdots=\alpha_{k}=\alpha$, and this completes the proof of (2).
 \end{proof}

Before the next result we make a simple but important remark.

\begin{Remark}
\label{Graded.subal.}
Let $A$ be a $G$-graded algebra (not necessarily unital) which is $\beta$-commutative. If there exists a graded subalgebra $C\subseteq A$ that has a regular grading, then $A$ also has a regular grading with bicharacter $\beta$. Indeed, we only need to verify the first condition of regularity. Given any $n$-tuple $(g_{1},\ldots, g_{n})\in G^{n}$ by regularity of $C$, there exists $c_{g_{1}}\in C_{g_{1}}$,\dots, $c_{g_{n}}\in C_{g_{n}}$ such that $c_{g_{1}}\cdots c_{g_{n}}\neq 0$, since these elements are homogeneous in $A$ we conclude $A$ is a $G$-graded regular algebra. 
\end{Remark}

Let $A$ be a finite dimensional algebra with regular $G$-grading and bicharacter $\beta$. By Proposition \ref{sum.twisted} we can write $A=D_{1}\oplus\cdots  \oplus D_{k}+J(A)$, where $D_{1}$,\dots, $D_{k}$ are twisted group algebras with at least one isomorphic to $K^{\alpha}G$, $\alpha\in H^{2}(G,K^{\ast})$. If $k=1$, since we are assuming $A$ has a unit it follows that $D_{1}J(A)\neq 0$. Suppose $k>1$ and assume $D_{1}J(A)=J(A)D_{1}=0$. Then, of course, in this case $A$ must be isomorphic to the algebra 
\[
\begin{pmatrix}
    B & 0\\
    0 & D_{1}
\end{pmatrix}
\]
where $B=D_{2}\oplus \cdots \oplus D_{k}+J(B)$, $J(B)=J(A)$. If $D_{1}\cong K^{\alpha}G$, then $B$ may be any $\beta$-commutative algebra, with or without a regular $G$-grading. If $D_{1}\cong K^{\alpha_{i}}H_{i}$, where $H_{i}$ is a proper subgroup of $G$, and $\alpha_{i}\in H^{2}(H_{i},K^{\ast})$, then we look at $D_{2}J(A)$ and this process can be continued. The next example illustrates a case in which this occurs.

\begin{Example}
\label{Example.Jacobson}
In $M_{6}(K)$, consider the following matrices
\[
L=\begin{pmatrix}
  D & 0 & 0\\
  0 & D & 0\\
  0 & 0 & D
\end{pmatrix},\quad M:=\begin{pmatrix}
  0 & N & 0\\
  0 & 0 & 0\\
  0 & 0 & 0
\end{pmatrix},\quad J=\begin{pmatrix}
  0 & 0 & 0\\
  0 & 0 & 0\\
  0 & 0 & N
\end{pmatrix} 
\]
where 
\[
D=\operatorname{diag}(1,-1),\quad \text{and}\quad N=\begin{pmatrix}
    0 & 1\\
    1 & 0
\end{pmatrix}. 
\]
These matrices satisfies the following relations
\[
LM=-ML,\quad M^{2}=0,\quad JM=MJ=0,\quad LJ=-JL.
\]

Moreover $N^{2}=I_{2}$,  $L^{2}=I_{6}$,  where $I_{2}$ and $I_{6}$ are the identity matrices in $2\times 2$ and $6\times 6$ respectively, $D^{n}=\operatorname{diag(1,(-1)^{n})}$ for all $n\in \mathbb{N}$, and 
\[
LJ=\begin{pmatrix}
  0 & 0 & 0\\
  0 & 0 & 0\\
  0 & 0 & P
\end{pmatrix},\quad \text{where}\quad P=\begin{pmatrix}
    0 & 1\\
    -1 & 0
\end{pmatrix}.
\]
Thus, $(LJ)^{2}=\operatorname{diag(0,0,I_{2})}$. Therefore if $\mathcal{A}$ is the subalgebra of $M_{6}(K)$ generated by $L$, $M$ and $J$, it can be seen that $\{I,J,J^{2},L,LJ,LJ^{2},M,LM\}$ is a basis of $\mathcal{A}$. Now, it is not difficult to see that $\mathcal{A}$ is a $\mathbb{Z}_{2}\times \mathbb{Z}_{2}$-graded algebra if we put
\[
\mathcal{A}_{(0,0)}:= \operatorname{span}_{K}\{I_{6},J^{2}\},\quad \mathcal{A}_{(0,1)}:=\operatorname{span}_{K}\{L,LJ^{2}\}, \quad \mathcal{A}_{(1,0)}:=\operatorname{span}_{K}\{M,J\}
\]
and $\mathcal{A}_{(1,1)}:=\operatorname{span}_{K}\{LM,LJ\}$.

Now, notice the following
\[
(I_{6})(L)(J)(LJ)=\begin{pmatrix}
  0 & 0 & 0\\
  0 & 0 & 0\\
  0 & 0 & -I_{2}
\end{pmatrix}
\]
which is non-nilpotent. Therefore, by the above relations, it follows that $\mathcal{A}$ is a $\mathbb{Z}_{2}\times \mathbb{Z}_{2}$-graded regular algebra whose the decomposition matrix is given by 
\[
M^{\mathcal{A}}=\begin{pmatrix}
    1 & 1 & 1 & 1\\
    1 & 1 & -1 & -1\\
    1 & -1 & 1 & -1\\
    1 & -1 & -1 & 1
\end{pmatrix}.
\]

Now, we will show that $J(\mathcal{A})=\operatorname{span}_{K}\{M,ML\}$. Consider $\mathcal{I}$ the ideal generated by $M$. By construction $\mathcal{I}=\operatorname{span}_{K}\{M,ML\}$, then it can be seen that
\[
\mathcal{A}/\mathcal{I}=\mathcal{B}
\]
where $\mathcal{B}$ is isomorphic to the subalgebra $\operatorname{span}_{K}\{I_{6},J,J^{2},L,LJ,LJ^{2}\}$. It can be easily seen that  $\operatorname{span}_{K}\{I_{2},P,N,D\}$ is $\langle D,N\rangle$, the subalgebra generated by $D$ and $N$, and it is isomorphic to $M_{2}(K)$. Thus, since $LJ^{2}=\operatorname{diag}(0,0,D)$, the subalgebra 
\[
\langle J,LJ^{2}\rangle=\operatorname{span}_{K}\{J,J^{2},LJ,LJ^{2}\}
\]

is isomorphic to $M_{2}(K)$. Now, defining $\mathscr{E}:=\operatorname{diag}(D,D,0)$, it follows that $\mathscr{E}^{2}=\operatorname{diag}(I_{2},I_{2},0)=I_{4}$, the $4\times 4$ identity matrix, and we have
\[
L=\mathscr{E}+LJ^{2},\quad \mathscr{E}(LJ^{2})=0,\quad \text{and}\quad I_{6}=I_{4}+(LJ^{2})^{2}.
\]
On the other hand, if $\mathcal{C}:=\operatorname{span}_{K} \{\mathscr{E},I_{4}\}$, then $\mathcal{C}$ is a subalgebra and 
\[
\mathcal{C}\cong K[x]/(x^{2}-1)\cong K\oplus K 
\]
where the last direct sum is as algebras. Therefore, it follows that 
\[
\mathcal{B}\cong K\oplus K\oplus M_{2}(K)\quad (\text{direct sum as algebras}).
\]
We conclude $J(\mathcal{A}/\mathcal{I})=0$. Therefore $\mathcal{I}=J(\mathcal{A})$. In this case, since $\langle J,LJ^{2}\rangle J(A)=0$ the algebra $\mathcal{A}$ is isomorphic to $\begin{pmatrix}
    B & 0\\
    0 & M_{2}(K)
\end{pmatrix}$, where $B=K\oplus K+\operatorname{span}_{K}\{M,ML\}$.  
\end{Example}

With the illustration provided by the Example \ref{Example.Jacobson}, and by the discussion preceding it, we may henceforth assume, without loss of generality, that $A=D_{1}\oplus \cdots \oplus D_{k}+J(A)$ is a regular $G$-graded algebra with bicharacter $\beta$ and that if $J(A)\neq 0$, then $D_{i}J(A)\neq 0$ for all $1\leq i\leq k$. In Section~ 6, it will become clear from the analysis how to deal with the case in which $A$ is only $\beta$-commutative.

\begin{Remark} 
\label{min.}
\begin{enumerate}
    \item  If the regular decomposition of $A$ is minimal, by Theorem \ref{charac.minimal}, it follows that $\det M^{A} \neq 0$ and $\exp(A)=|G|$. It is worth noting the cocycle $\alpha$ cannot be trivial, otherwise, $\beta\equiv 1$ and $\det M^{A}=0$. 

We know that for every $1 \leq j \leq k$, the cocycle $\alpha_{j}$ induces $\beta$, then $\det M^{K^{\alpha_{j}}G} \neq 0$. Therefore, the regular decomposition of $K^{\alpha_{j}}G$ is minimal and by Remark \ref{charc.minimal.twisted.1}, $G_{0}(\alpha_{j})=\{0\}$.

\item Since $A$ is in particular $\beta$-commutative, given $i\neq j$, because $D_{i}D_{j}=0$, it follows that $D_{i}J(A)D_{j}=0$.
\end{enumerate}
\end{Remark}

We recall that a commutative algebra $S$ is \textsl{local}, if $S/J(S)\cong K$, where $J(S)$ is the Jacobson radical of $S$. We have the following.

\begin{Corollary} 
\label{necess.condition}  $A\cong K^{\alpha}G+J(A)$, if and only if $A_{0}$ is a local algebra.
\end{Corollary}
\begin{proof}
  Suppose $A\cong K^{\alpha}G+J(A)$ and consider the Wedderburn-Malcev decomposition of $A_{0}=B+ J(A_{0})$, where $B\cong K^{\oplus m}$, with $m>1$. Let $D$ be a $G$-graded simple subalgebra of $A$ isomorphic to $K^{\alpha}G$. Suppose there exists a non-invertible homogeneous element $x_{0} \in B$, then $x_{0}=y+z$, where $y\in D$ and $z\in J(A)$. Since $x_{0}\in B\subseteq A_{0}$ we must have $y\in D_{0}$ and $z\in J(A)_{0}\underset{(\ref{eq.radical})}{=}J(A_{0})$. Since $D_{0}\cong K$, up to a constant, we can assume $x_{0}=1+z$, where $z\in J(A_{0})$. Thus, $x_0$ is an invertible element in $A_0$ that is a contradiction. It turns out $m=1$, i.e., $A_0$ is a local algebra.

  Conversely, suppose that $A_{0}=K+ J(A_{0})$. By Proposition \ref{sum.twisted}, there exist graded simple subalgebras $D_{1}$, \dots, $D_{k}$ of $A$ such that $D_{i} \cong K^{\alpha}H_{i}$, $\alpha_{i}\in H^{2}(G,K^{\ast})$, for each $1 \leq i \leq k$. Also, again by Proposition \ref{sum.twisted} there exists $D_{i}\cong K^{\alpha_{i}}G$. 
  Assume $k>1$ and let $\{d_{g}^{(1)} \mid g \in H_{1}\}$ and $\{d_{g}^{(2)} \mid g \in H_{2}\}$ be bases of $D_{1}$ and $D_{2}$, respectively, with $d_{g}^{(i)} d_{h}^{(i)}=\alpha(g, h)d_{g+h}^{(i)}$ for $i \in \{1, 2\}$ and $g$, $h\in H_{i}$. It is clear that $d_{0}^{(1)}$, $d_{0}^{(2)} \in K$ because $(D_{i})_{0}\cong K$, $i\in \{1,2\}$. It implies the existence of $\lambda \in K$ such that $d_{0}^{(1)} = \lambda d_{0}^{(2)}$, that is a contradiction. We conclude that $i=1$, thus $A \cong K^{\alpha}G +J(A)$. 
\end{proof}

Finite-dimensional regular graded algebras whose neutral component is local do not necessarily have to be isomorphic to a twisted group algebra, even when the regular decomposition is minimal. This is well illustrated by the following two examples, which rely on the fact that a twisted group algebra (over an algebraically closed field of characteristic $0$) is semisimple (see \cite[Theorem 3.3.6]{Karpilovsky.vol.2}).

\begin{Example}Let $A=K[z,t]$ be the polynomial algebra in two commutative variables. Denote by $I$ the ideal of $A$ generated by the set $\{z^{2}, t^{2}-1\}$. Writing $x$ instead of $\pi(x)$, where $\pi\colon A\rightarrow A/I$ is the canonical projection and $x\in A$, the quotient algebra $B:=A/I$ satisfies the relations $z^{2}=0$ and $t^{2}=1$. It follows that $\{1,z,t,zt\}$ forms a basis for $B$. Now, if we set $B_{0}=\text{\rm span}_{K}\{1,z\}$ and $B_{1}=\text{\rm span}_{K}\{t,zt\}$, then $B=B_{0}\oplus B_{1}$, and it defines a $\mathbb{Z}_{2}$-grading on $B$. We note that for any $m\in \mathbb{N}$
\[
t^{m}=\begin{cases}
    1,\quad \text{if}\quad m\quad \text{is even}\\
    t,\quad \text{if}\quad m\quad \text{is odd}
\end{cases}.
\]
Therefore, $B$ is a finite dimensional $\mathbb{Z}_{2}$-graded algebra with regular grading and bicharacter $\beta\equiv 1$. Moreover, $A_{0}=K+J(A_{0})$, where $J(A_{0})=\text{\rm span}_{K}\{z\}$ and $J(A)=\text{\rm span}_{K}\{z,zt\}$. The algebra $B$ cannot be isomorphic to a twisted group algebra because $J(A)\neq 0$. 
\end{Example}
\begin{Example} Let $\{0\}$ be the trivial abelian group, and let $e_{i,j}$ denote the elementary matrices of $M_{2}(K)$. Define the algebra
\[
U_{T}:=\operatorname{span}_{K}\{e_{1,1}+e_{2,2},e_{1,2}\}
=\Big\{\begin{pmatrix}
    a & b\\
    0 & a
\end{pmatrix}\mid a,b\in K\Big\}.
\]
Then, by Remark \ref{trivial.case}, the $\{0\}$-graded algebra $U_{T}=(U_{T})_{0}$ is a regular algebra with minimal regular decomposition, and it is a local algebra because $U_{T}=K(e_{1,1}+e_{1,2})+J(U_{T})$, where $J(U_{T})=Ke_{1,2}$. Because $J(U_{T})\neq 0$, $U_{T}$ cannot be isomorphic to a twisted group algebra.
\end{Example}

\begin{Lemma} 
\label{key.class.}  Assume $A_{0}$ is a local algebra. Then, if $J(A_{0})\neq 0$, we have
\[
J(A) \cong K^{\alpha}G \otimes J(A_{0}),
\]
where $J(A_{0})$ is considered with the trivial grading.

\end{Lemma}
\begin{proof} Since $A_{0}$ is local, by Corollary \ref{necess.condition}, $A\cong K^{\alpha}G+J(A)$. Let $D$ be a graded simple subalgebra of $A$ isomorphic to $K^{\alpha}G$ and denote by $\mathcal{D}=\{d_{g}\mid g\in G\}$ a basis of $D$ satisfying $d_{g}d_{h}=\alpha(g,h)d_{g+h}$, for any $g$, $h\in G$. We claim that $DJ(A_{0})=J(A)$. Indeed, since $J(A_{0})\subseteq J(A)$ and $J(A)$ is a graded two-sided ideal, we obtain that $DJ(A_{0})\subseteq J(A)$. Thus, it remains to prove  $J(A)\subseteq DJ(A_{0})$. Let $g\in G$ and $z_{g}\in J(A)_{g}$, then
\[
z_{g}=d_{g}d_{g}^{-1}z_{g}=d_{g}(d_{g}^{-1}z_{g});
\]
since $d_{g}^{-1}z_{g}\in J(A)_{0}\underset{(\ref{eq.radical})}{=}J(A_{0})$, we conclude $J(A)_{g}=D_{g}J(A_{0})$. Thus,
\[
J(A)=\oplus_{g\in G}J(A)_{g}=\oplus_{g\in G}D_{g}J(A_{0})\underbrace{=}_{J(A_{0})\subseteq A_{0}}DJ(A_{0}).
\]
Observe that $D\cap J(A_{0})=\{0\}$ because $D_{0}=\text{\rm span}_{K}\{d_{0}\}$ and $d_{0}$ is invertible. Consider the only linear function $\psi\colon D\otimes J(A_{0})\rightarrow J(A)$ satisfying $\psi (d\otimes a)=da.$ We show that $\psi$ is an isomorphism of graded rings, where $J(A_{0})$ is considered with the trivial $G$-grading: $J(A_{0})_{0}=J(A_{0})$ and $J(A_{0})_{g}=0$ if $g\neq 0$. If $d\otimes a$, $d'\otimes a'\in D\otimes J(A_{0})$ then, since $J(A_{0})\subseteq A_{0}$, it follows 
\[
\psi((d\otimes a)(d'\otimes a))=\psi(dd'\otimes aa')=dd'aa'\underbrace{=}_{J(A_{0})\subseteq A_{0}}(da)(d'a')=\psi(d\otimes a)\psi(d'\otimes a').
\]
On the other hand, given $g\in G$, if $d\otimes a\in (D\otimes J(A_{0}))_{g}=D_{g}\otimes J(A_{0})$ then 
\[
\psi(d\otimes a)=da\in D_{g}J(A_{0})=(J(A))_{g}.
\]
Thus, $\psi$ is a homomorphism of graded rings. Since $DJ(A_{0})=J(A)$ we conclude that $\psi$ is surjective. Now we show that $\psi$ is injective. Let $z\in D\otimes J(A_{0})$ satisfy $\psi(z)=0$. If $\{a_{1},\ldots, a_{k}\}$ is a basis of $J(A_{0})$, $\{d_{g}\otimes a_{1},\ldots, d_{g}\otimes a_{k}\}$ is a basis for $(D\otimes J(A_{0}))_{g}$, for every $g\in G$. Therefore, we can write $z$ as 
\[
z=\sum_{g,j}\gamma_{g,j}d_{g}\otimes a_{j},\quad \text{where}\quad \gamma_{g,j}\in K .
\]
Thus $\psi(z)=0$ if and only if $\sum_{g,j}\gamma_{g,j}d_{g} a_{j}=0$, that is, for every $g\in G$, we have
\[
\sum_{j}\gamma_{g,j}d_{g} a_{j}=0.
\]
The last expression vanishes if and only if
\[
 d_{g}(\sum_{j}\gamma_{g,j}a_{j})=0,
\]
but since $d_{g}$ is invertible we conclude $\sum_{j}\gamma_{g,j}a_{j}=0$, that is, $\gamma_{g,j}=0$ for every $1\leq j\leq k$. Hence, $\psi$ is injective and we are done. 
\end{proof}

The last result turns out to be crucial in the proof of the next theorem.

\begin{Theorem}
\label{class.1}
 Assume $A_{0}$ is a local algebra. Then, if $J(A_{0})\neq 0$, there exist $\mathscr{U}$ a commutative local subalgebra of $A$ and $\alpha\in H^{2}(G,K^{\ast})$ be a cocycle that induces $\beta$ such that $A\cong K^{\alpha}G\otimes \mathscr{U}$ as $G$-graded algebras (here we consider $\mathscr{U}$ with the trivial grading). Moreover, if $J(A_{0})=0$, then $A\cong K^{\alpha}G$. 
    
\end{Theorem}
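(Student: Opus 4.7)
The plan is to take $\mathscr{U}:=A_0$ as the commutative local subalgebra and construct an explicit graded algebra isomorphism $\psi\colon K^\alpha G\otimes A_0\to A$. The first observation I would make is that $A_0$ is automatically commutative, and moreover central in $A$ in the sense that it commutes with every homogeneous element: the bicharacter identity (\ref{eq.3}) applied with $g=k=0$ gives $\beta(0,h)=\beta(0,h)^{2}$, hence $\beta(0,h)=1$ for every $h\in G$, and item (ii) of Definition \ref{def.reg.} then forces $ab_h=b_ha$ for all $a\in A_0$, $b_h\in A_h$. Together with the hypothesis that $A_0$ is local, this immediately makes $\mathscr{U}:=A_0$ a commutative local subalgebra of $A$.

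Next I would invoke Corollary \ref{necess.condition} to write $A=D+J(A)$ with $D\cong K^\alpha G$ a graded simple subalgebra, the sum being direct as vector spaces by the graded Wedderburn--Malcev theorem (cf.\ the proof of Proposition \ref{sum.twisted}). Fix a homogeneous basis $\{d_g\mid g\in G\}$ of $D$ satisfying $d_gd_h=\alpha(g,h)d_{g+h}$, chosen with the standard cohomological normalization $\alpha(0,g)=\alpha(g,0)=1$, so that $d_0=1_A$.

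I would then define $\psi\colon D\otimes A_0\to A$ by $\psi(d\otimes a):=da$, putting the trivial grading on $A_0$. Multiplicativity follows from the centrality of $A_0$: for $d\in D_g$, $d'\in D_h$ and $a,a'\in A_0$,
\[
\psi(d\otimes a)\,\psi(d'\otimes a')=d\,a\,d'\,a'=d\,d'\,a\,a'=\psi\bigl((d\otimes a)(d'\otimes a')\bigr),
\]
and $\psi$ is clearly $G$-graded. For surjectivity, the proof of Lemma \ref{key.class.} gives $J(A)_g=D_g J(A_0)$ for every $g\in G$, so combined with $D_g=D_g\cdot 1\subseteq D_g A_0$ one obtains $A_g=D_g A_0=\psi(D_g\otimes A_0)$. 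For injectivity I would argue by dimension: Lemma \ref{key.class.} yields $\dim J(A)=|G|\dim J(A_0)$, whence
\[
\dim A=|G|+|G|\dim J(A_0)=|G|\dim A_0=\dim(D\otimes A_0).
\]

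Finally, the case $J(A_0)=0$ is handled directly: locality of $A_0$ forces $A_0=K$, and Proposition \ref{first.regular} gives $A\cong K^\alpha G$ at once. I do not expect a real obstacle: the substantive content has already been packaged into Corollary \ref{necess.condition} and Lemma \ref{key.class.}, and the main conceptual point is simply recognizing that $A_0$ itself plays the role of $\mathscr{U}$. The only technical care needed is the cocycle normalization $d_0=1_A$ so that $\psi$ is truly a unital algebra map; everything else is bookkeeping on the direct sum decomposition and a dimension count.
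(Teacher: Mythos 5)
Your proof is correct and follows essentially the same route as the paper: both rest on Corollary \ref{necess.condition} and Lemma \ref{key.class.}, and your choice $\mathscr{U}=A_{0}$ coincides with the paper's $\mathscr{U}=K+J(A_{0})$, your explicit map $\psi(d\otimes a)=da$ with the dimension count being a more carefully executed version of the paper's chain of isomorphisms of the two summands. The only assertion left unjustified is that $d_{0}=1_{A}$ (it does hold: writing $1_{A}=\lambda d_{0}+j$ with $j\in J(A_{0})$, the facts that $d_{0}$ is an idempotent, that $D\cap J(A)=0$, and that $j$ is nilpotent force $\lambda=1$ and $j=0$), but nothing essential in your argument depends on it.
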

\begin{proof} 
  It follows by Lemma \ref{necess.condition} that $A\cong K^{\alpha}G+J(A)$ as graded algebras. Thus 
 \[
 A\cong K^{\alpha}G+J(A)\cong K^{\alpha}G\otimes K+J(A)
 \]
 and by Lemma \ref{key.class.} we have $J(A)\cong K^{\alpha}G\otimes J(A_{0})$, hence if $\mathscr{U}:=K+J(A_{0})$ we get
 \[
 A\cong K^{\alpha}G\otimes K+ K^{\alpha}G\otimes J(A_{0})\cong  K^{\alpha}G\otimes (K+ J(A_{0}))=K^{\alpha}G\otimes \mathscr{U}.
 \]
 Finally, by the proof of Lemma \ref{key.class.}, given $g \in G$ and $z \in J(A)_{g}$, we can express $z$ as $z = d_{g}t$, where $t \in J(A_{0})$. If $J(A_{0})=0$, then $J(A)=0$, consequently, $A \cong K^{\alpha}G$.
\end{proof}

Now we consider the case where $A_{0}$ is not necessarily local. By the Wedderburn-Malcev theorem we write $A_{0}=E_{1}\oplus \cdots \oplus E_{l}+ J(A_{0})$, where $E_{i}\cong K$, for every $1\leq i\leq l$. Denote by $1_{E_{i}}$ the identity of $E_{i}$, $1\leq i\leq l$, then we have $1=1_{E_{1}}+\cdots +1_{E_{l}}$. 

\begin{Lemma} 
\label{cota.}
Assume that $A$ has complete support. Write $A=D_{1}\oplus \cdots \oplus D_{k}+J(A)$, where $D_{i}\cong K^{\alpha}G$, for every $1\leq i\leq k$, and $\alpha$ induces $\beta$. For each $1\leq i\leq k$, let $\mathcal{D}_{i}:=\{d_{g}^{(i)} \mid g \in G\}$ be a basis of $D_{i}$ satisfying 
\[
d_{g}^{(i)} d_{h}^{(i)}=\alpha(g, h) d_{g+h}^{(i)},\quad \text{for any}\quad g,h\in G.
\]
Then the following conditions hold:
\begin{enumerate}
    \item For every $1\leq i\leq k$, $d_{0}^{(i)}$ belongs to $E_{r_{i}}$, for some $1\leq r_{i}\leq l$.
    \item  $k=l$.
    \item If $J(A_{0})\neq 0$, then $E_{r_{i}}J(A_{0})\neq 0$ for every $1\leq i\leq k$.
\end{enumerate}
\end{Lemma}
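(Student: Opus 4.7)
The plan is to exploit the fact that $A_{0}$ is commutative (since $\beta(0,0)=1$) and that in a commutative algebra the primitive orthogonal idempotents summing to $1$ are uniquely determined, together with a reduction to the local case via central idempotents. Note that $e_i := \alpha(0,0)^{-1}d_0^{(i)}$ is the identity of $D_i$, hence an idempotent of $A_0$.

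For parts (1) and (2), I would show that the two subsets $\{e_1,\ldots,e_k\}$ and $\{1_{E_1},\ldots,1_{E_l}\}$ are both complete sets of orthogonal primitive idempotents in $A_0$ summing to $1_A$. Orthogonality of the $e_i$'s follows because $D_iD_j=0$ for $i\neq j$ in the graded Wedderburn--Malcev decomposition (see the proof of Proposition \ref{sum.twisted}), and their sum is the identity of the semisimple part $D_1\oplus\cdots\oplus D_k$, which equals $1_A$. Direct computation with the decomposition $A_0 = Ke_1\oplus\cdots\oplus Ke_k\oplus J(A_0)$ shows that $e_iA_0 = Ke_i + e_iJ(A_0)$ is a local ring with residue field $K$, so each $e_i$ is primitive in the commutative algebra $A_0$. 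The standard uniqueness argument for primitive orthogonal idempotents in a commutative algebra -- expanding $e_i = e_i\cdot 1 = \sum_j e_i\cdot 1_{E_j}$ and using primitivity of $e_i$ -- then forces $k=l$ and, after reordering, $e_i = 1_{E_{r_i}}$. Consequently $d_0^{(i)} = \alpha(0,0)\cdot 1_{E_{r_i}} \in E_{r_i}$, which proves both (1) and (2).

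For (3), I argue by contradiction: suppose $E_{r_i}J(A_0) = 0$, equivalently $d_0^{(i)}J(A_0)=0$, for some $i$. I introduce the subalgebra $A^{(i)}:=d_0^{(i)}A$. Since $d_0^{(i)}$ is central in $A$ (as $\beta(0,g)=1$ for every $g$), $A^{(i)}$ is a $G$-graded two-sided ideal that is itself an algebra with identity $e_i$. It inherits the bicharacter $\beta$ from $A$, and the elements $d_g^{(i)} \in D_i \subseteq A^{(i)}$ certify that its grading is regular; since the bicharacter is unchanged, the regular decomposition of $A^{(i)}$ is still minimal. The neutral component
\[
(A^{(i)})_0 = d_0^{(i)}A_0 = Kd_0^{(i)} + d_0^{(i)}J(A_0) = Kd_0^{(i)}
\]
is isomorphic to $K$, hence local with trivial Jacobson radical. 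Theorem \ref{class.1} then yields $A^{(i)}\cong K^{\alpha}G$, so $J(A^{(i)}) = d_0^{(i)}J(A) = 0$. Using $D_i = d_0^{(i)}D_i$, this gives $D_iJ(A) \subseteq d_0^{(i)}J(A) = 0$. But $J(A_0)\neq 0$ implies $J(A)\neq 0$, and Remark \ref{min.} (2) forces $D_iJ(A)\neq 0$, a contradiction.

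The main obstacle I anticipate is in (3): one has to verify that $A^{(i)}$ inherits each of the structural hypotheses required by Theorem \ref{class.1} -- regularity with the same bicharacter $\beta$, minimality of the decomposition, and locality of the neutral component -- carefully enough that the reduction to the local case is legitimate, even though Theorem \ref{class.1} was proved under a global locality assumption. Once this reduction is cleanly in place, the contradiction is a direct application of Theorem \ref{class.1} together with Remark \ref{min.} (2).
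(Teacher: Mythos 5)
Your proof is correct. For parts (1) and (2) you follow essentially the paper's route: the identity $e_{i}$ of $D_{i}\cong K^{\alpha}G$ spans the one-dimensional component $(D_{i})_{0}\subseteq A_{0}$, and the two families $\{e_{i}\}$ and $\{1_{E_{j}}\}$ are matched up; your packaging via uniqueness of complete sets of primitive orthogonal idempotents in the commutative Artinian algebra $A_{0}$ is, if anything, a more careful justification of the paper's assertion that $(D_{i})_{0}$ ``must be equal to $E_{r_{i}}$'' (the paper instead routes through Corollary \ref{necess.condition} to get $(D_{i})_{0}\cong K$ and then counts). For part (3) you take a genuinely different route. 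The paper argues directly: by Remark \ref{min.}(2) there are $t\in D_{i}$ and $a\in J(A)_{g}$ with $ta\neq 0$; then $1_{E_{r_{i}}}a\neq 0$, and $(d_{g}^{(i)})^{-1}a$ is a nonzero element of $J(A)_{0}=J(A_{0})$ absorbed by $1_{E_{r_{i}}}$, so $E_{r_{i}}J(A_{0})\neq 0$ --- a two-line computation using only the invertibility of $d_{g}^{(i)}$ inside $D_{i}$ and equation (\ref{eq.radical}). You instead cut down by the central idempotent $e_{i}$, check that $e_{i}A$ is again regular and minimal with neutral component $Ke_{i}\cong K$ under the contradiction hypothesis, and invoke Theorem \ref{class.1} (Proposition \ref{first.regular} would already suffice and is lighter) to get $e_{i}A\cong K^{\alpha}G$, hence $e_{i}J(A)=J(e_{i}A)=0$, contradicting $D_{i}J(A)\neq 0$. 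Your reduction is legitimate: minimality depends only on $\beta$, which restricts unchanged, the elements $d_{g}^{(i)}$ witness regularity of $e_{i}A$, and there is no circularity since Theorem \ref{class.1} does not depend on this lemma. The trade-off is that you deploy heavier structural machinery where a direct computation suffices; the compensation is that your argument makes explicit that $e_{i}A$ is itself a regular minimal graded algebra, which anticipates the decomposition into the subalgebras $W_{i}$ used later in Lemma \ref{general.key.class}.
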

\begin{proof} For each $1\leq i\leq k$, $D_{i}$ is a twisted group algebra isomorphic to $K^{\alpha}G$, in particular $D_{i}$ is a $G$-graded regular algebra with bicharacter $\beta$. We already know that $(D_{i})_{0}\cong K$. On the other hand, $(D_{i})_{0}\subseteq A_{0}$, then  $(D_{i})_{0}$ must be equal to $E_{r_{i}}$, for some $1\leq r_{i}\leq l$. In particular, $d_{0}^{(i)}\in E_{r_{i}}$.

Now, since $D_{1} \oplus \cdots \oplus D_{k}$ is a direct sum of algebras, it follows that for any $1 \leq i \neq j \leq k$, we have $r_{i} \neq r_{j}$. This implies that the map $i \mapsto r_{i}$ is injective, therefore $k \leq l$. Suppose $k < l$. The elements $d_{0}^{(1)}$, \dots, $d_{0}^{(k)}$ can be chosen to satisfy $1 = d_{0}^{(1)}+\cdots + d_{0}^{(k)}$, i.e., $1\in E_{r_{1}}\oplus \cdots \oplus E_{r_{k}}$. However, since $1 = 1_{E_{1}} + \cdots + 1_{E_{l}}$, if $\{r_{1}, \ldots, r_{k}, j_{1}, \ldots, j_{p}\} = \{1, \ldots, l\}$ with $k+ p=l$, then by uniqueness we would have $d_{0}^{(i)} = 1_{E_{r_{i}}}$ for any $1 \leq i \leq k$, and 
\[
1_{E_{j_{1}}} + \cdots + 1_{E_{j_{p}}} = 1 - (1_{E_{r_{1}}} + \cdots + 1_{E_{r_{k}}}) = 1 - 1 = 0,
\]
that is $1_{E_{j_{1}}} = \cdots = 1_{E_{j_{p}}} = 0$, which is a contradiction. Thus, $k = l$. In particular, $i\mapsto r_{i}$ is a bijection. 

Finally, we will show that $E_{r_{i}}J(A_{0})\neq 0$ for every $1\leq i\leq k$. Recall that, by hypothesis on $A$,  $D_{i}J(A)\neq 0$, for each $1\leq i\leq k$. Let $1\leq i\leq k$ and take $t\in D_{i}$ and $a\in J(A)_{g}$ (for some $g\in G$) such that $ta\neq 0$. If $1_{E_{r_{i}}}a= 0$, then $d_{h}^{(i)}a=0$, for any $h\in G$ and so, $ta=0$, that is a contradiction. Thus, $1_{E_{r_{i}}}a\neq 0$. In particular, $1_{E_{r_{i}}}(d_{g}^{(i)})^{-1}a=(d_{g}^{(i)})^{-1}a\neq 0$, i.e, $E_{r_{i}}J(A_{0})\neq 0$.
\end{proof}

\begin{Lemma}
\label{general.key.class}
Assume that $A$ has complete support. Suppose $J(A_{0})\neq 0$, then,  considering $J(A_{0})$ with the trivial grading we have
\[
K^{\alpha}G\otimes J(A_{0})\cong J(A) 
\]
as $G$-graded rings.
\end{Lemma}
\begin{proof} Here we will use the same notations as in the proof of Lemma \ref{cota.}. Also, by item (1) in the same lemma, we assume $d_{0}^{(i)}=1_{E_{r_{i}}}$, for every $1 \leq i \leq k$. By Remark \ref{min.}, for each $1 \leq i \neq j \leq k$, we have $D_{i}J(A)D_{j}=0$, and by item (3) in the Lemma \ref{cota.}, we have $E_{r_{i}}J(A_{0})\neq 0$ for any $1\leq i\leq k$. 

Given $g\in G$, take $z\in J(A)_{g}$, then
\[
z=1z=(1_{E_{r_{1}}}+\cdots +1_{E_{r_{k}}})z=1_{E_{r_{1}}}z+\cdots +1_{E_{r_{k}}}z,
\]
therefore
\[
z=d_{g}^{(1)}((d_{g}^{(1)})^{-1}z)+\cdots +d_{g}^{(k)}((d_{g}^{(k)})^{-1}z).
\]
Since $(d_{g}^{(j)})^{-1}z\in J(A)_{0}\underset{(\ref{eq.radical})}{=}J(A_{0})$, for any $1\leq j\leq k$, we get 
\[
J(A)_{g}\subseteq \oplus_{i=1}^{k}(D_{i})_{g}J(A_{0}).
\]
On the other hand, since $J(A_{0})\subseteq J(A)$ and $J(A)$ is a graded ideal, for any $1\leq i\leq k$, we have $(D_{i})_{g}J(A_{0})\subseteq J(A)_{g}$. Hence, we get
\[
J(A)_{g}=\oplus_{i=1}^{k}(D_{i})_{g}J(A_{0}).
\]
We obtain
\[
J(A) = \oplus_{g \in G} J(A)_{g} = \oplus_{g \in G} \bigg( \oplus_{i=1}^{k} (D_{i})_{g} J(A_{0}) \bigg) 
= \oplus_{i=1}^{k} \bigg( \oplus_{g \in G} (D_{i})_{g} \bigg) J(A_{0}) = \oplus_{i=1}^{k} D_{i} J(A_{0}).
\]
For every $1\leq i\leq k$, consider $J(A)^{(i)}:=D_{i}J(A_{0})$, then 
\[
J(A)=\bigoplus_{i=1}^{k}J(A)^{(i)}
\]
 and $J(A)^{(i)}$ is the Jacobson radical of the finite dimensional $G$-graded algebra with regular grading
\[
W_{i}:= D_{i}\oplus J(A)^{(i)}.
\]
However,
 \[
 (J(W_{i}))_{0}=J(A_{0})\cap J(A)^{(i)}=1_{E_{r_{i}}} J(A_{0}),
 \]
  and $W_{i}$ is a local algebra because $ (W_{i})_{0}\cong  E_{r_{i}}\oplus 1_{E_{r_{i}}} J(A_{0})$.
 Thus, by Lemma \ref{key.class.} we have
 \[
 J(A)^{(i)}\cong D_{i}\otimes 1_{E_{r_{i}}} J(A_{0}).
 \]
 Therefore
\[
J(A)=\oplus_{i=1}^{k}J(A)^{(i)}
     \cong \oplus_{i=1}^{k}D_{i}\otimes 1_{E_{r_{i}}} J(A_{0})\cong \oplus_{i=1}^{k}(K^{\alpha}G)\otimes 1_{E_{r_{i}}}J(A_{0})
\]
and so
\begin{equation}
\label{eq.aux.}
J(A)\cong K^{\alpha}G\otimes \bigg(\oplus_{i=1}^{k}1_{E_{r_{i}}}J(A_{0}) \bigg).    
\end{equation}

Now, if $V:=1_{E_{r_{1}}}J(A_{0})+\cdots+1_{E_{r_{k}}}J(A_{0})$, then since $A_{0}$ is commutative and $J(A_{0})$ is a nilpotent ideal of $A_{0}$, we conclude $V$ is a nilpotent ideal of $A_{0}$ contained in $J(A_{0})$. On the other hand, given $a\in J(A_{0})$, it can be written $a=1a=(1_{E_{r_{1}}}a)+\cdots+(1_{E_{r_{k}}}a)\in V$, i.e, $V=J(A_{0})$.

Then, by (\ref{eq.aux.}) we get
\[
J(A)\cong K^{\alpha}G\otimes J(A_{0}). \qedhere
\]

\end{proof}

\begin{Remark}
\label{iso.rapi.}
Let us notice a simple but important fact. Consider $A=D_{1}\oplus \cdots \oplus D_{k}+J(A)$ as in Lemma \ref{general.key.class}. Given $1\leq i\leq k$, any element of $E_{r_{i}}$ is of the form $1_{E_{r_{i}}}s$, $s\in K$, and so, we can define
\[
\phi'\colon D_{i}\times E_{r_{i}}\rightarrow D_{i},\quad (d,1_{E_{r_{i}}}s)\mapsto ds.
\]
It is easy to see that $\phi'$ is a bilinear map. Thus, by the universal property of the tensor product, there exists a linear map $\phi\colon D_{i}\otimes E_{r_{i}}\rightarrow D_{i}$ satisfying $\phi(d\otimes 1_{E_{r_{i}}}s)=ds$, for all $d\in D_{i}$ and $s\in K$. Since 
\[
\phi(d\otimes 1_{E_{r_{i}}})=d,\quad \text{for any}\quad d\in D_{i},
\]
we conclude $\phi$ is surjective. Finally, because of $\dim E_{r_{i}}=1$, we obtain $\dim  (D_{i}\otimes E_{r_{i}})=\dim D_{i}$. In particular, $\phi$ is a bijection, i.e, $ D_{i}\otimes E_{r_{i}}\cong D_{i}$. Moreover, if one considers $E_{r_{i}}$ with the trivial grading, $\phi$ turns out to be an isomorphism of graded algebras.

\end{Remark}

\begin{Theorem}
\label{main.theorem}
 Assume that $A$ has complete support.  Suppose that $J(A_{0})\neq 0$. Then, there exists $\mathscr{V}$ a commutative algebra such that the following conditions are satisfied
\begin{enumerate}
    \item Considering $\mathscr{V}$ with the trivial grading, we have that, as $G$-graded algebras
            \[
            A\cong K^{\alpha}G\otimes \mathscr{V} .
            \]
    \item  Let $P$ the maximal semisimple subalgebra of $A_{0}$ such that $A_{0}=P+J(A_{0})$. Then if $\dim P=1$ and $J(A_{0})=0$, it follows that $A\cong K^{\alpha}G$. The converse is also true.
    \end{enumerate}
Furthermore, if $J(A_{0})=0$, then $A\cong (K^{\alpha}G)^{\oplus k}.$
\end{Theorem}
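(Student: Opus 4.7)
The plan is to assemble a tensor-product description of $A$ from pieces already constructed: Proposition \ref{sum.twisted} gives $A \cong D_1 \oplus \cdots \oplus D_k + J(A)$ with each $D_i \cong K^{\alpha}G$; Lemma \ref{cota.} identifies $(D_i)_0$ with the simple summand $E_{r_i}$ of $A_0 = P + J(A_0)$ and shows $k = l$; Remark \ref{iso.rapi.} absorbs each $E_{r_i}$ into a tensor factor; and Lemma \ref{general.key.class} takes care of the radical. The commutative algebra $\mathscr{V}$ will turn out to be $A_0$ itself.

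After normalising $d_0^{(i)} = 1_{E_{r_i}}$ so that $(D_i)_0 = E_{r_i}$, Remark \ref{iso.rapi.} yields a graded isomorphism $D_i \cong K^{\alpha}G \otimes E_{r_i}$. Assembling these for $i = 1, \dots, k$ produces
\begin{equation*}
D_1 \oplus \cdots \oplus D_k \;\cong\; K^{\alpha}G \otimes (E_{r_1} \oplus \cdots \oplus E_{r_k}) \;=\; K^{\alpha}G \otimes P
\end{equation*}
as $G$-graded algebras, where $P$ is trivially graded. Combining this with the graded isomorphism $J(A) \cong K^{\alpha}G \otimes J(A_0)$ from Lemma \ref{general.key.class} and factoring out the common $K^{\alpha}G$,
\begin{equation*}
A \;\cong\; K^{\alpha}G \otimes P \,+\, K^{\alpha}G \otimes J(A_0) \;\cong\; K^{\alpha}G \otimes (P + J(A_0)) \;=\; K^{\alpha}G \otimes A_0,
\end{equation*}
proves part (1) with $\mathscr{V} := A_0$. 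For (2), the forward implication is immediate: $A \cong K^{\alpha}G$ forces $A_0 \cong K$ and hence $\dim P = 1$. For the converse I would use that $\dim P = 1$ makes $k = l = 1$ in Lemma \ref{cota.}, so part (1) reduces to $A \cong K^{\alpha}G \otimes (K + J(A_0))$, and one reads the equivalence together with the last assertion to pin down when this collapses to $K^{\alpha}G$.

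Finally, if $J(A_0) = 0$ then by (\ref{eq.radical}) one has $J(A)_0 = 0$; since $J(A)$ is graded and each $d_g^{(i)} \in D_i$ is invertible, left multiplication by $(d_g^{(i)})^{-1}$ identifies $J(A)_g$ with $J(A)_0 = 0$, forcing $J(A) = 0$. Proposition \ref{sum.twisted} then gives $A \cong (K^{\alpha}G)^{\oplus k}$ directly. The main obstacle is verifying that the assembled map in part (1) is a genuine isomorphism of graded \emph{algebras} and not merely of graded vector spaces: one must check that the orthogonality $D_i D_j = 0$ in $A$ for $i \ne j$ is recovered from $E_{r_i} E_{r_j} = 0$ inside $K^{\alpha}G \otimes A_0$, and that the mixed products with $J(A_0)$ behave correctly under the tensor structure. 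This is ultimately straightforward because all the $D_i$ share the same cocycle $\alpha$ and the multiplication on $P + J(A_0) = A_0$ is exactly the one inherited from $A_0 \subseteq A$, but it must be spelled out to conclude.
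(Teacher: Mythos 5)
Your proposal is correct and matches the paper's proof essentially step for step: both assemble $A \cong K^{\alpha}G \otimes A_{0}$ from Proposition \ref{sum.twisted}, Lemma \ref{cota.}, Remark \ref{iso.rapi.} and Lemma \ref{general.key.class} (the paper just names $\mathscr{V}:=E_{r_{1}}\oplus\cdots\oplus E_{r_{k}}+J(A_{0})$ and then observes $\mathscr{V}\cong A_{0}$), both reduce item (2) to Corollary \ref{necess.condition} and Theorem \ref{class.1}, and both settle the case $J(A_{0})=0$ by writing each $z\in J(A)_{g}$ as $\sum_{i}d_{g}^{(i)}t_{0,i}$ with $t_{0,i}\in J(A_{0})$. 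The only cosmetic differences are your invertibility phrasing in that last step (which needs the sum over all $i$, exactly as in the paper) and your explicit flag that the multiplicativity of the assembled map should be checked, a point the paper leaves equally implicit.
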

\begin{proof} As we saw above, there exists $k\in \mathbb{N}$ such that $A=D_{1}\oplus \cdots \oplus D_{k}+J(A)$, where $D_{i}\cong K^{\alpha}G$ for each $1\leq i\leq k$. 

 By Lemma \ref{general.key.class} we have
\[
J(A)\cong K^{\alpha}G\otimes J(A_{0})\quad (\text{as $G$-graded rings}).
\]
 Define
\[
\mathscr{V}:=E_{r_{1}}\oplus \cdots \oplus E_{r_{k}}+ J(A_{0})\quad (\text{direct sum of vector spaces})
\]
and consider $\mathscr{V}$ with the trivial $G$-grading: $\mathscr{V}_{0}=\mathscr{V}$, and $\mathscr{V}_{g}=\{0\}$ if $g\neq 0$. We notice by item (2) in Lemma \ref{cota.} that $\mathscr{V}\cong A_{0}$.

Since $E_{r_{1}} \cong \cdots \cong E_{r_{k}} \cong K$, we get
\begin{align*} 
A &= D_{1}\oplus \cdots \oplus D_{k} +J(A)\underset{(\ref{iso.rapi.})}{\cong} (D_{1}\otimes E_{r_{1}})\oplus \cdots \oplus (D_{k}\otimes E_{r_{k}})+J(A)\\
&\cong (K^{\alpha}G\otimes E_{r_{1}})\oplus \cdots \oplus (K^{\alpha}G\otimes  E_{r_{k}})+J(A)\underset{(\ref{general.key.class})}{\cong} K^{\alpha}G\otimes (E_{r_{1}}\oplus \cdots \oplus E_{r_{k}})+K^{\alpha}G\otimes J(A_{0}) \\
& \cong K^{\alpha}G\otimes (E_{r_{1}}\oplus \cdots \oplus E_{r_{k}}+J(A_{0}))\cong K^{\alpha}G\otimes \mathscr{V}
\end{align*}
and item (1) follows. For  item (2), notice that if $P$ is a maximal semisimple subalgebra of $A_{0}$ such that $A_{0}=P+J(A_{0})$, then by uniqueness, up to  conjugation, of the Wedderburn-Malcev theorem \cite[Theorem 3.4.3]{GZbook}, it follows that 
\[
P\cong E_{1}\oplus \cdots \oplus E_{k}.
\]
 Thus, we have $\dim P= k$.  Now, by Corollary \ref{necess.condition} and Theorem \ref{class.1}, we conclude if $\dim P=1$ and $J(A_{0})=0$, then $A\cong K^{\alpha}G$. The converse is clear, because $K^{\alpha}G$ is semisimple, and $(K^{\alpha}G)_{0}\cong K$. Finally, recall that by the proof of Lemma \ref{general.key.class}, for any $g\in G$, and $z \in J(A)_{g}$, we can express it as  
\[
z=d_{g}^{(1)}t_{0,1}+\cdots+d_{g}^{(k)}t_{0,k},
\]  
where $t_{0,j} \in J(A_{0})$, $1\leq j\leq k$. Consequently, if $J(A_{0}) = 0$, then $J(A) = 0$, implying that $A \cong (K^{\alpha}G)^{\oplus k}$.
\end{proof}

\section{An explicit computation of the graded codimensions}

In this section, we will use Theorem \ref{main.theorem} in order to explicitly calculate the codimension sequence of a finite dimensional $G$-graded algebra $A$ with bicharacter $\beta$, complete support, and minimal regular decomposition. Moreover, if we write $A=D_{1}\oplus \cdots \oplus D_{k}+J(A)$ , where $D_{i}$ is a $G$-graded simple subalgebra of $A$, then $D_{i}J(A)\neq 0$, for all $1\leq i\leq k$.  In particular, it will be shown that the graded PI-exponent of such algebras coincides with the ordinary PI-exponent. 

We shall recall some basic notions. Let $Q$ be any finite group and consider the free $Q$-graded algebra 
\[
K\langle X_{Q}\rangle=K\langle x_{i}^{(q)}\mid q\in Q,\quad i\in \mathbb{N}\rangle.
\] 
Given $n\in \mathbb{N}$, let $\mathbf{q}=(q_1,\ldots,q_n)\in Q^{n}$. Consider the following subspace of $K\langle X_{Q}\rangle$
\[
P_{\mathbf{q}}=\text{\rm span}_{K}\{x_{\sigma(1)}^{(q_{\sigma(1)})}\cdots x_{\sigma(n)}^{(q_{\sigma(n)})}\mid \sigma\in S_{n}\}
\]
and define
\[
P_{n}^{Q}:=\oplus_{\mathbf{g}\in Q^{n}} P_{\mathbf{q}}.
\]
In this way, if $R=\oplus_{q\in Q} R_{q}$ is a $Q$-graded algebra, we define the $n$-th $Q$-{\it graded codimension} of $R$ by
\[
c_{n}^{Q}(R)=\dim_{K}\dfrac{P_{n}^{Q}}{P_{n}^{Q}\cap T_{Q}(R)}.
\]


Let $Q$ be a finite group and $R$ be a $Q$-graded algebra. 

\begin{enumerate}
    \item[(a)] Let $c_{n}(R)$ be the ordinary $n$-th codimension of $R$, then:
    \begin{enumerate}
        \item[(a.1)] $c_{n}(R)\leq c_{n}^{Q}(R)$ \cite[Lemma 3.1]{BGH},
        \item [(a.2)]  If $R$ is a PI-algebra, then $c_{n}^{Q}(R)\leq |Q|^{n}c_{n}(R)$ \cite[p. 268]{GZbook};
    \end{enumerate}
    
    \item[(b)] the limit
    \begin{equation}
    \label{basic.defi.}
        \exp^{Q}(R)=\lim_{n\rightarrow \infty} \sqrt[n]{c_{n}^{Q}(R)}
    \end{equation}
exists and is a non-negative integer \cite[Theorem 2.3]{AG}. 
\end{enumerate}

\begin{Definition}
    The integer in item (b) above is called the \textsl{graded PI-exponent of $R$}. 
\end{Definition}
    
\begin{Remark} 
It is worth briefly describing the main works related to the graded PI-exponent in the context of associative algebras over fields of characteristic zero. In 1999, Giambruno and Zaicev showed in \cite{GZ} the existence and integrality of the ordinary PI exponent (or graded by a trivial group), thus providing a positive answer to Amitsur’s conjecture. Later on, in 2003, Benanti, Giambruno, and Pipitone extended this result in \cite{Pipitone}, considering algebras graded by $\mathbb{Z}_{2}$. In 2009, Giambruno and La Mattina further established in \cite{LaMattina} the existence of the graded PI exponent for algebras graded by finite abelian groups. Finally, the case of algebras graded by arbitrary finite groups was solved in 2013 by Aljadeff and Giambruno in \cite{AG}.
\end{Remark}

Denote by $x_{i}^{(g)}$ and $v_{i}^{(g,h)}$ the free generators of $K\langle X_{G} \rangle$ and $K\langle X_{G \times G} \rangle$, respectively. Given $\beta \colon G \times G \rightarrow K^{\ast}$ a bicharacter  of $G$, let $K_{\beta}\langle X_{G} \rangle$ denote the relatively free algebra with bicharacter $\beta$ (Theorem \ref{free.reg.}), where the free generators of $K_{\beta}\langle X_{G} \rangle$ are denoted by $y_{i}^{(g)}$. 

Given $\textbf{h}=(h_1, \ldots, h_n) \in G^{n}$, we define a linear map $\phi_{\textbf{h}} \colon P_{n}^{G} \rightarrow P_{n}^{G \times G}$ as follows: Given $\tau \in S_{n}$, if in $K_{\beta}\langle X_{G}\rangle$ we have
\[
y_{\tau(1)}^{h_{\tau(1)}} y_{\tau(2)}^{h_{\tau(2)}} \cdots y_{\tau(n)}^{h_{\tau(n)}} = \mu(\textbf{h}, \tau) y_{1}^{h_{1}} y_{2}^{h_{2}} \cdots y_{n}^{h_{n}}
\]
with $\mu(\textbf{h},\tau) \in K^{\ast}$ denoting a product of terms $\beta(h_{i},h_{j})$, then we define $\phi_{\textbf{h}}$ on the monomials of $P_{n}^{G}$ as follows:
\[
\phi_{\textbf{h}} \big( x_{\tau(1)}^{(g_{\tau(1)})} x_{\tau(2)}^{(g_{\tau(2)})} \cdots x_{\tau(n)}^{(g_{\tau(n)})} \big) = \mu(\textbf{h}, \tau) v_{\tau(1)}^{(g_{\tau(1)}, h_{\tau(1)})} v_{\tau(2)}^{(g_{\tau(2)}, h_{\tau(2)})} \cdots v_{\tau(n)}^{(g_{\tau(n)}, h_{\tau(n)})}
\]
and extend it by linearity to all of $P_{n}^{G}$.   

\begin{Theorem}
\label{teo.app.}
Let $B$ be a $G$-graded algebra with regular grading, $S$ any $G$-graded algebra, and consider the $G$-graded algebra $L:=B\otimes S$.

\begin{enumerate}
    \item[(i)] Given $\textbf{h}=(h_1, \ldots, h_n) \in G^{n}$ and $f(x_{1}^{(g_{1})}, \ldots, x_{n}^{(g_{n})}) \in P_{n}^{G}$, it follows that $f \in T_{G}(S)$ if and only if $\phi_{\textbf{h}}(f) \in T_{G \times G}(L)$.  
    \item[(ii)] 
\[
P_{n}^{G \times G} \cap T_{G \times G}(L) = \sum_{\textbf{h} \in G^{n}} \phi_{\textbf{h}}(P_{n}^{G} \cap T_{G}(S)), \quad \textbf{h} = (h_{1}, \ldots, h_{n}).
\]
In particular,
\[
c_{n}^{G}(L)=|G|^{n}c_{n}^{G}(S).
\]
\end{enumerate}

\end{Theorem}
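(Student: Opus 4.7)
My plan is to prove a key evaluation identity for $\phi_{\mathbf{h}}$ and then deduce both parts from it together with a decomposition of the multilinear spaces by variable multi-grades. Concretely, I would first establish that, on a homogeneous pure-tensor substitution $v_i^{(g_i, h_i)} \mapsto s_i \otimes b_i$ with $s_i \in S_{g_i}$ and $b_i \in B_{h_i}$, one has
\[
\phi_{\mathbf{h}}(f)(s_1 \otimes b_1, \ldots, s_n \otimes b_n) \;=\; f(s_1, \ldots, s_n) \otimes (b_1 \cdots b_n).
\]
By linearity in $f$ it is enough to check this on a monomial $x_{\tau(1)}^{(g_{\tau(1)})} \cdots x_{\tau(n)}^{(g_{\tau(n)})}$: the substitution produces $\mu(\mathbf{h}, \tau)\, s_{\tau(1)} \cdots s_{\tau(n)} \otimes b_{\tau(1)} \cdots b_{\tau(n)}$, and the $\beta$-commutativity of $B$ (which holds because $B$ satisfies the defining relations of $K_{\beta}\langle X_{G}\rangle$) together with the definition of $\mu(\mathbf{h}, \tau)$ combine to produce the stated right-hand side.

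With this identity, part (i) is almost formal. For $(\Rightarrow)$: if $f \in T_{G}(S)$ then $f(s_1, \ldots, s_n) = 0$, so the right-hand side of the identity vanishes on every pure-tensor substitution; by multilinearity this extends to arbitrary homogeneous elements of $L_{(g_i, h_i)}$, giving $\phi_{\mathbf{h}}(f) \in T_{G \times G}(L)$. For $(\Leftarrow)$: by the regularity of $B$ (Definition \ref{def.reg.}(i)) we can choose $b_i \in B_{h_i}$ with $b_1 \cdots b_n \neq 0$; the hypothesis $\phi_{\mathbf{h}}(f) \in T_{G \times G}(L)$ together with the identity then forces $f(s_1, \ldots, s_n) \otimes (b_1 \cdots b_n) = 0$, and since in a tensor product over a field a nonzero factor on one side cannot annihilate a nonzero factor on the other, we deduce $f(s_1, \ldots, s_n) = 0$ for every $s_i \in S_{g_i}$, so $f \in T_{G}(S)$.

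For (ii), I would use the multi-grade decompositions $P_{n}^{G} = \bigoplus_{\mathbf{g} \in G^{n}} P_{\mathbf{g}}$ and $P_{n}^{G \times G} = \bigoplus_{(\mathbf{g}, \mathbf{h})} P_{(\mathbf{g}, \mathbf{h})}$ according to the grades of the variables; both are respected by the corresponding graded $T$-ideals because evaluations on distinct multi-grades involve disjoint sets of homogeneous elements and are therefore independent. For each pair $(\mathbf{g}, \mathbf{h})$ the map $\phi_{\mathbf{h}}$ rescales each of the $n!$ basis monomials of $P_{\mathbf{g}}$ by the nonzero scalar $\mu(\mathbf{h}, \tau)$, hence it is a linear bijection $P_{\mathbf{g}} \to P_{(\mathbf{g}, \mathbf{h})}$, and by part (i) it carries $T_{G}(S) \cap P_{\mathbf{g}}$ onto $T_{G \times G}(L) \cap P_{(\mathbf{g}, \mathbf{h})}$. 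Summing these identifications over $(\mathbf{g}, \mathbf{h})$ gives the asserted formula $P_{n}^{G \times G} \cap T_{G \times G}(L) = \sum_{\mathbf{h}} \phi_{\mathbf{h}}(P_{n}^{G} \cap T_{G}(S))$, and passing to dimensions of the quotients yields
\[
c_{n}^{G \times G}(L) \;=\; \sum_{\mathbf{g},\mathbf{h}} \dim \frac{P_{(\mathbf{g}, \mathbf{h})}}{T_{G \times G}(L) \cap P_{(\mathbf{g}, \mathbf{h})}} \;=\; |G|^{n} \sum_{\mathbf{g}} \dim \frac{P_{\mathbf{g}}}{T_{G}(S) \cap P_{\mathbf{g}}} \;=\; |G|^{n}\, c_{n}^{G}(S),
\]
the factor $|G|^{n}$ coming from the $|G|^{n}$ choices of $\mathbf{h} \in G^{n}$.

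The main delicate step is the key identity itself: one must check carefully that the scalar $\mu(\mathbf{h}, \tau)$ built into $\phi_{\mathbf{h}}$ precisely absorbs the $\beta$-reordering scalar produced by permuting the $b_i$'s in $B$, a cancellation that is available exactly because $B$ has a regular $G$-grading with bicharacter $\beta$. Once that identity is in hand, the rest of the proof is a formal decomposition of multilinear polynomials by multi-grades together with a direct dimension count.
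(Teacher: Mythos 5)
Your overall strategy --- prove the evaluation identity on pure tensors, use regularity of $B$ to choose $b_{1}\cdots b_{n}\neq 0$ for the converse direction of (i), and then decompose $P_{n}^{G\times G}$ into multihomogeneous components to get (ii) and the codimension count --- is exactly the standard argument that the paper itself delegates to \cite[Theorem 3.1, Corollary 3.2]{BD}, so the route is the same. The converse of (i) via ``a nonzero tensor factor cannot be killed by a nonzero factor on the other side,'' the multihomogeneity of graded $T$-ideals, and the bijectivity of $\phi_{\mathbf{h}}\colon P_{\mathbf{g}}\to P_{(\mathbf{g},\mathbf{h})}$ are all correct and are what the cited proofs do.

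The one step you explicitly single out as ``the main delicate step'' and then do not carry out is precisely where the computation does not close as written. With $\mu(\mathbf{h},\tau)$ defined by $y_{\tau(1)}^{h_{\tau(1)}}\cdots y_{\tau(n)}^{h_{\tau(n)}}=\mu(\mathbf{h},\tau)\,y_{1}^{h_{1}}\cdots y_{n}^{h_{n}}$, the $\beta$-commutativity of $B$ gives $b_{\tau(1)}\cdots b_{\tau(n)}=\mu(\mathbf{h},\tau)\,b_{1}\cdots b_{n}$ for $b_{i}\in B_{h_{i}}$ --- the \emph{same} scalar, because $B$ satisfies exactly the defining relations of $K_{\beta}\langle X_{G}\rangle$. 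Hence evaluating $\phi_{\mathbf{h}}\bigl(x_{\tau(1)}^{(g_{\tau(1)})}\cdots x_{\tau(n)}^{(g_{\tau(n)})}\bigr)=\mu(\mathbf{h},\tau)\,v_{\tau(1)}\cdots v_{\tau(n)}$ at $v_{i}\mapsto b_{i}\otimes s_{i}$ produces $\mu(\mathbf{h},\tau)^{2}\,(b_{1}\cdots b_{n})\otimes(s_{\tau(1)}\cdots s_{\tau(n)})$: the two scalars compound instead of cancelling. (Test $n=2$, $\tau=(12)$: you get $\beta(h_{2},h_{1})^{2}(b_{1}b_{2})\otimes(s_{2}s_{1})$ rather than $(b_{1}b_{2})\otimes(s_{2}s_{1})$.) Your key identity $\phi_{\mathbf{h}}(f)(\ldots)=f(s_{1},\ldots,s_{n})\otimes(b_{1}\cdots b_{n})$ therefore requires the coefficient $\mu(\mathbf{h},\tau)^{-1}$ in the definition of $\phi_{\mathbf{h}}$ (equivalently, $\mu$ defined by the reversed relation $y_{1}^{h_{1}}\cdots y_{n}^{h_{n}}=\mu(\mathbf{h},\tau)\,y_{\tau(1)}^{h_{\tau(1)}}\cdots y_{\tau(n)}^{h_{\tau(n)}}$); with the coefficient as literally defined, the evaluation yields $\sum_{\tau}c_{\tau}\mu(\mathbf{h},\tau)^{2}m_{\tau}$ applied to the $s_{i}$'s, and the ``if and only if'' in (i) breaks for a general $S$. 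This is almost certainly an off-by-inverse slip in the definition rather than a flaw in the theorem, but since the identity is the engine of the whole proof, you must write it down and verify the exponent of $\mu$ explicitly rather than assert that the cancellation ``is available.''
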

\begin{proof}
The proof of (i) is exactly the same as \cite[Theorem 3.1]{BD}, since the regularity of $B$ implies the existence of $a_{h_{1}} \in B_{h_{1}}$, \dots, $a_{h_{n}} \in B_{h_{n}}$ such that $a_{h_{1}} \cdots a_{h_{n}} \neq 0$. Similarly, the proof of (ii) is the same as the one given in \cite[Corollary 3.2]{BD}.
\end{proof}

\begin{Corollary}  For any $n\in \mathbb{N}$,  $c_{n}^{G}(A)=|G|^{n}$. In particular, $\exp^{G}(A)=\exp(A)=|G|$. 
\end{Corollary}
\begin{proof} By Theorem \ref{main.theorem} the algebra $A$ is $G$-graded isomorphic to $(K^{\alpha}G)\otimes \mathscr{V}$, where $\mathscr{V}$ is a commutative algebra and $\alpha$ is a cocycle inducing $\beta$. Then, Theorem \ref{teo.app.}(ii) implies that

\[
c_{n}^{G}(A)=c_{n}^{G}((K^{\alpha}G)\otimes \mathscr{V})=|G|^{n}c_{n}^{G}(\mathscr{V}).
\]
Since $\mathscr{V}$ is commutative and non-nilpotent, we get $c_{n}^{G}(\mathscr{V})=1$, therefore $c_{n}^{G}(A)=|G|^{n}$. On the other hand, by (\ref{basic.defi.}), we obtain 
\[
\exp^{G}(A)=\lim_{n\rightarrow \infty} \sqrt[n]{c_{n}^{G}(A)}=\sqrt[n]{|G|^{n}}=|G|\underset{(\ref{charac.minimal})}{=}\exp(A). \qedhere
\]
\end{proof}

\section{General case}

In this section, we consider the general case, namely, when the algebra $A$ does not necessarily have complete support.  As in the proof of Proposition \ref{sum.twisted} we write $A=D_{1}\oplus\cdots \oplus D_{k}+J(A)$, where $D_{1}\cong\cdots  \cong D_{p}\cong K^{\alpha}G$, $\alpha\in H^{2}(G,K^{\ast})$, and $D_{i}\cong K^{\alpha_{i}}H_{i}$, where $H_{i}$ is a proper subgroup of $G$ and $\alpha_{i}\in H^{2}(H_{i},K^{\ast})$ for all $p<i\leq k$. Also, we will assume $D_{i}J(A)\neq 0$ for all $1\leq i\leq k$. 

Notice if $p=0$ then $A$ is just a $\beta$-commutative algebra which is not regular and if $p=k$ then $A$ has complete support. Given that our analysis will implicitly cover the first case, we can assume without loss of generality $1\leq p<k$. 

From now on, we will use the same notation as in the proof of Lemma \ref{cota.}.

Notice the proof of item (2) of Lemma \ref{cota.} is entirely independent of the assumptions of complete support. Therefore, the same arguments yield the following.

\begin{Lemma} If $A_{0}=E_{r_{1}}\oplus \cdots \oplus E_{r_{l}}+J(A_{0})$ and $A=D_{1}\oplus\cdots \oplus D_{k}+J(A)$, then the map $\{1,\ldots, k\}\rightarrow \{r_{1},\ldots, r_{l}\}$ given by $i\mapsto r_{i}$, is a bijection. In particular, $k=l$.
\end{Lemma}

\begin{Lemma}
\label{Lemma.A}
If $A_{0}=E_{r_{1}}\oplus \cdots \oplus E_{r_{k}}+J(A_{0})$, then for each $1\leq i\leq k$, $(D_{i})_{0}=E_{r_{i}}$, for some $1\leq r_{i}\leq l$. In particular $d_{0}^{(i)}$ belongs to $E_{r_{i}}$, for all $1\leq i\leq k$.
\end{Lemma}
\begin{proof}  Since $(H_{i})_{0}\cong K$, the result follows as in item (1) of Lemma \ref{cota.}.
\end{proof}
\begin{Lemma}
\label{general.lemma}
 If $J(A_{0})\neq 0$, then  $E_{r_{i}}J(A_{0})\neq 0$, for all $1\leq i\leq p$.
\end{Lemma}
\begin{proof} The proof follows from the same argument used in the proof of item (3) in Lemma \ref{cota.}.
\end{proof}

\begin{Lemma} If $p>1$, then the cocycles $\alpha_{1}$,\ldots, $\alpha_{p}$ corresponding respectively to $D_{1}$, \dots, $D_{p}$, are equal. 
\end{Lemma}
\begin{proof}
    Because of \cite[Lemma 31]{Eli1}, we have $T_{G}(D_{i})=T_{G}(D_{j})$, for all $1\leq i,j\leq p$. Now, the proof follows exactly as in item (3) of Proposition \ref{sum.twisted}.
\end{proof}

Based on the results above, the decomposition $A=D_{1}\oplus \cdots \oplus D_{k}+J(A)$ satisfies the following: 
\begin{enumerate}
        \item $D_{i}J(A)\neq 0$, for all $1\leq i\leq k$.
        \item $(D_{i})_{0}=E_{r_{i}}$, for all $1\leq i\leq k$.
         \item  If $J(A_{0})\neq 0$, then $1_{E_{r_{j}}}J(A_{0})\neq 0$, for all $1\leq j\leq p$.
         
        \item $\alpha_{1}=\cdots=\alpha_{p}=\alpha$ and $D_{i}\cong K^{\alpha}G$, for all $1\leq i\leq p$.
        
\end{enumerate}

\begin{Remark} Given $p+1\leq i\leq k$, we  have $1_{E_{r_{i}}}J(A)\neq 0$. Indeed, since $D_{i}J(A)\neq 0$, there exists $d_{h}^{(i)}\in \mathcal{D}_{i}$ such that $d_{h}^{(i)}z\neq 0$, for some $z\in J(A)$. If $1_{E_{r_{i}}}z=0$, we would have
\[
d_{h}^{(i)}z=\alpha(h,0)^{-1}d_{h}^{(i)}1_{E_{r_{i}}}z=0
\]
that is a contradiction. 
\end{Remark}

\begin{Theorem} 
\label{main.general}

. 
\begin{enumerate}
    \item[(i)] Suppose $J(A_{0})\neq 0$. Then, there exist $\mathscr{V}$ a commutative algebra, and $\mathscr{W}$  a $G$-graded algebra which is not regular such that
\[
A\cong (K^{\alpha}G\otimes \mathscr{V})+\mathscr{W}\quad (\text{as $G$-graded algebras})
\]
and $J(\mathscr{V})+J(\mathscr{W}_{0})=J(A_{0})$.
\item[(ii)] If $J(A_{0})=0$, then $A$ is semisimple and it is a sum of twisted group algebra with at least one of its $G$-graded simple components isomorphic to $K^{\alpha}G$, for some $\alpha\in H^{2}(G,K^{\ast})$. 
\end{enumerate}

\end{Theorem}
\begin{proof}
We shall use arguments analogous to those of Lemma \ref{general.key.class}. Denote by $\mathcal{D}_{i}$ the canonical basis of $D_{i}$, $1\leq i\leq k$. Given $g\in \operatorname{Supp}(J(A))$ and $z\in J(A)_{g}$, we know that $g\in \operatorname{Supp}(D_{i})$, for all $1\leq i\leq p$. Now, we have
\[
z=1z=(1_{E_{r_{1}}}z+\cdots +1_{E_{r_p}}z)+(1_{E_{r_{p+1}}}z+\cdots+1_{E_{r_{k}}}z)
\]
thus, as in \ref{general.key.class} we can write
\[
z=(d_{g}^{(1)}((d_{g}^{(1)})^{-1})z)+\cdots+d_{g}^{(p)}((d_{g}^{(p)})^{-1})z))+(1_{E_{r_{p+1}}}z+\cdots+1_{E_{r_{k}}}z),
\]
hence
\[
z\in (D_{1})_{g}J(A_{0})\oplus \cdots \oplus (D_{p})_{g}J(A_{0})\oplus (D_{p+1})_{0}J(A)_{g}\oplus \cdots \oplus (D_{k})_{0}J(A)_{g}.
\]
In that way, we conclude
\[
J(A)=(D_{1})J(A_{0})\oplus \cdots \oplus (D_{p})J(A_0)\oplus (D_{p+1})_{0}J(A)\oplus \cdots \oplus (D_{k})_{0}J(A).
\]
Now, if $W_{i}:=D_{i}\oplus (D_{i})_{0}J(A)$, where $p+1\leq i\leq k$, then clearly $W_{i}$ is a $G$-graded algebra which is not regular and $J(W_{i})=(D_{i})_{0}J(A)$. Therefore, proceeding as in the proof of Lemma \ref{general.key.class}, we get
\[
J(A)\cong (K^{\alpha}G\otimes \widehat{J}_{0})+\big(\oplus_{i=p+1}^{k} J(W_{i})\big)
\]
where $\widehat{J}_{0}:=1_{E_{r_{1}}}J(A_{0})+\cdots+1_{E_{r_{p}}}J(A_{0})$. Consequently, in the same way as Theorem \ref{main.theorem}, if $\mathscr{V}:=E_{r_{1}}\oplus\cdots \oplus  E_{r_{p}}+\widehat{J}_{0}$ we have
\[
A\cong K^{\alpha}G\otimes \mathscr{V}+(W_{p+1}\oplus \cdots\oplus  W_{k})\quad (\text{as $G$-graded algebras}).
\]
 Thus, if we take $\mathscr{W}:=W_{p+1}\oplus \cdots \oplus W_{k}$, then the result follows. Now, by the definition of $\mathscr{W}$ and by the same argument used at the end of the proof of Lemma \ref{general.key.class}, we obtain
 \[
 J(\mathscr{W}_{0})=1_{E_{r_{p+1}}}J(A_{0})+\cdots +1_{E_{r_{k}}}J(A_{0})
 \]
 therefore, again by the end of proof of Lemma \ref{general.key.class} we get
 \[
J(\mathscr{V})+J(\mathscr{W}_{0})=1_{E_{r_{1}}}J(A_{0})+\cdots+1_{E_{r_{k}}}J(A_{0})=J(A_{0})
 \]
 and this proves (i). We observe at this point that, for some $p+1\leq i\leq k$, it may occur that $1_{E_{r_{i}}}J(A_{0})=0$. 

 Let us prove (ii). Suppose $J(A_{0})=0$. We know that there exists $d_{g}^{(1)}\in D_{1}$, with some $g\in G$, such that $d_{g}^{(1)}z\neq 0$, for some homogeneous element $z\in (J(A))_{\kappa}$, $\kappa\in G$. In particular, by multiplication by $(d_{g}^{(1)})^{-1}$, we conclude $1_{E_{r_{1}}}z\neq 0$. If for some $h\in G$ we have  $d_{h}^{(1)}z=0$, then $1_{E_{1}}z=0$, which is not the case. Therefore, $d_{h}^{(1)}z\neq 0$, for all $h\in G$, that implies $0\neq d_{-\kappa}^{(1)}z\in A_{0}\cap J(A_{0})=J(A_{0})$, which is a contradiction. We conclude $J(A)=0$, so $A$ is a direct sum of twisted group algebras with $D_{1}\cong K^{\alpha}G$, where $\alpha\in H^{2}(G,K^{\ast})$.
\end{proof}

\begin{Remark} The hypothesis that $D_{i}J(A)\neq 0$ cannot be removed in the Theorem \ref{main.general}. Indeed, considering the Example \ref{Example.Jacobson}, in that regular grading we have $J(\mathcal{A}_{0})=0$, but $J(\mathcal{A})\neq 0$. 
    
\end{Remark}

With the results of this section, together with the discussion preceding Example \ref{Example.Jacobson}, we obtain the main result of this work:

\begin{Theorem} 
\label{main.general.1}
Let $\mathcal{B}$ be a finite-dimensional $G$-graded regular algebra with bicharacter $\beta$. Then there exist $\mathcal{A}$ and $\mathcal{S}$, both $G$-graded subalgebras of $\mathfrak{B}$, with 
$\mathcal{S}=K^{\alpha_{1}}Q_{1}\oplus \cdots \oplus K^{\alpha_{t}}Q_{t}$, $\mathcal{B}=\mathcal{A}\oplus \mathcal{S}$ (direct sum as algebras) and $J(\mathcal{B})=J(\mathcal{A})$, such that the following hold:

\begin{enumerate}
    \item If $J(\mathcal{B})\neq 0$, then $\mathcal{A}\neq 0$ and the product of any graded simple component of $\mathcal{A}$ with the Jacobson radical of $\mathcal{B}$ is nonzero. 
    \item If $\mathcal{A}$ has non-graded simple components isomorphic to some $K^{\alpha}G$, then $\mathcal{S}$ must contain at least one isomorphic to $K^{\alpha}G$, and $\mathcal{A}$ is a $\beta$-commutative algebra that is not  a regular $G$-grading.

    \item If $\mathcal{A}$ has a graded simple component isomorphic to $K^{\alpha}G$, then we have the options: 
    \begin{enumerate}
        \item[(i)] If $J(\mathcal{B}_{0})\neq 0$, there exist $\mathscr{V}$, a commutative algebra, and $\mathscr{W}$, a $G$-graded algebra which is not regular, such that
\[
\mathcal{A}\cong (K^{\alpha}G\otimes \mathscr{V})+\mathscr{W} \quad (\text{as $G$-graded algebras})
\]
and $J(\mathscr{V})+J(\mathscr{W}_{0})=J(\mathcal{B}_{0})$.
        \item[(ii)] If $J(\mathcal{B}_{0})=0$, then $J(\mathcal{B})=0$. In particular, $\mathcal{A}$ is semisimple.
    \end{enumerate}
\end{enumerate}
    
\end{Theorem}

We finish this paper by giving another characterization of finite dimensional regular gradings with minimal regular decomposition.
\begin{Theorem}  Let $\mathcal{B}$ be a finite dimensional $G$-graded regular algebra with bicharacter $\beta$. Then, the regular decomposition of $\mathcal{B}$ is minimal if and only if $\exp(\mathcal{B})=|G|$.  
\end{Theorem}
\begin{proof} 

By Theorem \ref{charac.minimal} we just need to show that $\exp(\mathcal{B})=|G|$ implies the regular decomposition of $\mathcal{B}$ is minimal. Since $T(\mathcal{B})=T(K^{\alpha}G)$, for some $\alpha\in H^{2}(G,K^{\ast})$, it is enough to show that $\exp(K^{\alpha}G)=|G|$ implies that $K^{\alpha}G$ is a central simple algebra. By Yamazaki's Theorem \cite[Corollary 8.2.10]{Karpilovsky},  there exist $n$, $m\in \mathbb{N}$ such that 
\[
K^{\alpha}G\cong\underbrace{ M_{n}(K)\oplus\cdots\oplus M_{n}(K)}_{m}.
\]
and by the construction of the PI-exponent we have  $|G|=\exp(A)=\dim M_{n}(K)$. Since $\dim K^{\alpha}G=|G|$, we conclude $K^{\alpha}G\cong M_{n}(K)$ are we are done. 
\end{proof}


\bibliographystyle{abbrv}
\bibliography{Ob.reg.bib}

\end{document}